\def\tef{transcendental entire function}
\numberwithin{equation}{section}
\def\blfootnote{\xdef\@thefnmark{}\@footnotetext}
\theoremstyle{plain}
\newtheorem{thm}{Theorem}[section]
\newtheorem{proposition}[thm]{Proposition}
\newtheorem{cor}[thm]{Corollary}
\newtheorem{lem}[thm]{Lemma}
\theoremstyle{remark} 
\newtheorem*{remark}{Remark}
\newcommand{\C}{{\mathbb{C}}}
\newcommand{\N}{{\mathbb{N}}}
\renewcommand{\Re}{\operatorname{Re}}
\newcommand{\dist}{\operatorname{dist}}
\newcommand{\interior}{\operatorname{int}}
\newcommand{\eps}{\varepsilon}
\newcommand*{\defeq}{\mathrel{\vcenter{\baselineskip0.5ex \lineskiplimit0pt
			\hbox{\scriptsize.}\hbox{\scriptsize.}}}%
	=}
\newcommand{\maindisc}{D}
\newcommand{\mainreef}{B_0}					
\newtheorem{Claim}{Claim}
\newenvironment{subproof}{\begin{proof}[Proof of claim.]}{%
\end{proof}}
\begin{document}
	\title[Wandering domains with nearly bounded orbits]{Wandering domains with nearly bounded orbits}
	\author{Leticia Pardo-Sim\'on, \, David J. Sixsmith}
	
	\address{Department of Mathematics \\ The University of Manchester\\
		Manchester \\ M13 9PL\\ UK\textsc{\newline \indent \href{https://orcid.org/0000-0003-4039-5556}{\includegraphics[width=1em,height=1em]{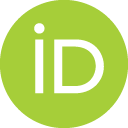} {\normalfont https://orcid.org/0000-0003-4039-5556}}}}
	\email{leticia.pardosimon@manchester.ac.uk}

	\address{School of Mathematics and Statistics\\ The Open University\\
		Milton Keynes MK7 6AA \\ UK \textsc{\newline \indent \href{https://orcid.org/0000-0002-3543-6969}{\includegraphics[width=1em,height=1em]{orcid2.png} {\normalfont https://orcid.org/0000-0002-3543-6969}}}}
	\email{david.sixsmith@open.ac.uk}
	
	\thanks{2020 Mathematics Subject Classification. Primary 37F10; Secondary 30D05.\vspace{3pt}\\ Key words: complex dynamics, wandering domain, transcendental entire function.\vspace{3pt}\\ }
	
	\begin{abstract}
		A major open question in transcendental dynamics asks if it is possible for points in a wandering domain to have bounded orbits, and more strongly, for a wandering domain to iterate only in a bounded domain. In this paper we give a partial answer to this question, by constructing a bounded wandering domain that spends, in a precise sense, nearly all of its time iterating in a bounded domain. This is in strong contrast to all previously known examples of wandering domains.
	\end{abstract}
	\maketitle
	\section{Introduction}
	Let $f \colon \C \to \C$ be an entire function, and let $f^n$ denote the $n$th iterate of the function $f$, for $n\geq 0$. We define the \emph{Fatou set} $F(f)$ as the set of points $z \in \C$ where the iterates $\{f^n\}_{n \in \mathbb{N}}$ form a normal family in some neighbourhood of $z$, and the \emph{Julia set} as its complement $J(f) \defeq \C \setminus F(f)$. Roughly speaking, the iterates of $f$ are stable at points in the Fatou set. For an introduction to the properties of these sets, and in particular the dynamics of {\tef}s, see, for example, the well-known survey \cite{Bergweilermerophormic}. 
	
	In this paper we are interested in \emph{wandering domains}. A wandering domain is a component of the Fatou set, $U$, with the property that $f^n(U) \cap f^m(U) \ne \emptyset$ only when $n = m$. A famous result of Sullivan \cite{Sullivan_noWD_85} implies that no polynomial has a wandering domain, making the study of these objects in transcendental dynamics of particular interest. Following Baker's first example of a transcendental entire map with a wandering domain from 1976, \cite{Baker_wandering_76}, numerous further examples have been provided; e.g.~\cite{Baker_wandering84, Herman_84,Erem_Lyubich_pathological_87, fagella_henriksen_Teich_09, bishop2015, Lazebnik_several_17, david_Shishikura_Wandering, Evdoridou_adi_leti_23}. 

	A significant open question in transcendental dynamics asks if it is possible for a point, and thus all points, of a wandering domain to have a bounded orbit. A stronger version of this question is whether there is a wandering Fatou component with bounded orbit. In other words, is there a transcendental entire function $f$ with a wandering domain $U$ such that its forward orbit, $\bigcup_{n \geq 0} f^n(U)$, is bounded? We give a partial answer to this question by constructing an example of a such a wandering domain $U$ which has a \emph{nearly bounded orbit}; there is a bounded domain $D$ such that
	\begin{equation}
		\label{eq.setofns}
		\lim_{k\rightarrow\infty} \frac{\#\{ n \leq k : f^n(U) \subset D \}}{k} = 1.
	\end{equation}
	In other words, the set of natural numbers $n$ for which $f^n(U)$ is contained in $D$ has upper (and lower) \textit{natural density} one. This is in particularly strong contrast to all existing examples of wandering domains, for which the quantity in \eqref{eq.setofns} is equal to zero for any choice of bounded domain $D$.
	
	We construct our example using classical approximation theory, a method first used to construct wandering domains by Eremenko and Lyubich in 1987, \cite{Erem_Lyubich_pathological_87}, and refined more recently in \cite{BEFRS_internal22, BocThaler_21,  lasse_DJ_Erem_22}. As with previous examples, our construction is quite delicate, particularly in the handling of approximation errors. We construct a sequence of entire functions $f_1, f_2, \ldots$ that converges locally uniformly to a transcendental entire function $f$. We control the errors at the $k$-th step by pulling back certain sets under $f_k$, and then use these sets to define $f_{k+1}$, ``holding up'' the images of the wandering domain, near the origin, by using the repelling fixed point of the M\"obius map
		$\Phi(z) \defeq 3z.$
	
	Our main result is as follows, and is, in fact, somewhat more general.  Recall that a compact set $K\subset \C$ is \textit{full} if $\C\setminus K$ is connected, and that a domain is \textit{regular} if it equals the interior of its closure.
	\begin{thm}
		\label{maintheorem}
		Let  $U$ be a regular domain whose closure in $\C$ is a full compact set, and suppose that $(n_j)_{j \in \mathbb{N}}, (m_j)_{j \in \mathbb{N}}$ are sequences of natural numbers, $(m_j)_{j \in \mathbb{N}}$ being strictly increasing, and set $n_0=m_0=0$. Then there is a bounded domain $D$, and a transcendental entire function $f$ for which $U$ is a wandering domain with the following property. For each $n \in \mathbb{N}$, the set $f^{n}(U)$ is either contained in $D$ or in $\C\setminus \overline{D}$, and $f^{n}(U)\subset D$ if and only if
		\begin{equation}
			\label{aneq}
			\sum_{i=0}^p n_i + m_i < n \leq  \left(\sum_{i=0}^p n_i + m_i\right) + n_{p+1}
		\end{equation}
		for some $p\geq 0$.
	\end{thm}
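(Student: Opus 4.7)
The plan is to construct $f$ as a locally uniform limit of entire functions $(f_k)_{k\geq 1}$, each obtained from its predecessor via a Runge-type approximation on a progressively larger full compact set encoding the prescribed orbit of $U$.

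Writing $N_p := \sum_{i=0}^p (n_i+m_i)$, I would first fix a target orbit as pairwise disjoint affine copies $V_n := c_n + r_n \overline{U}$ for $n\geq 1$, with $V_0 := \overline{U}$. On each inside block $N_p < n \leq N_p + n_{p+1}$ I set $c_{n+1} := 3c_n$ and $r_{n+1} := 3r_n$, so that $\Phi(z)=3z$ sends $V_n$ affinely onto $V_{n+1}$; the initial $c_{N_p+1}, r_{N_p+1}$ are chosen of order $3^{-n_{p+1}}$ times a reference location in $D$, ensuring the entire block lies inside $D$. Each outside block consists of $V_n \subset \C \setminus \overline{D}$ along a convenient trajectory, terminating at the position needed to launch the next inside block. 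I then define a model map $g$ on $\bigcup_n V_n$ by letting $g|_{V_n}$ be the unique affine map onto $V_{n+1}$; in particular $g = \Phi$ on every $V_n$ lying in an inside block. Each finite union $K_k := V_0 \cup \cdots \cup V_{N(k)}$ is a full compact set (as a finite disjoint union of full compacts), so Runge's theorem furnishes at step $k$ an entire $f_k$ with $\|f_k - g\|_{K_k} < \varepsilon_k$ and $\|f_k - f_{k-1}\|_{K_{k-1}} < \varepsilon_k$, for a carefully chosen summable sequence $\varepsilon_k$, yielding $f_k \to f$ locally uniformly to an entire $f$.

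The hardest part is the error control: a perturbation of size $\delta$ introduced at position $n$ of an inside block is magnified by $\Phi$ to roughly $3^{n_{p+1}-n}\delta$ by the end of the block, which can be arbitrarily large as $n_{p+1}$ is unbounded. The key mechanism provided by the repelling fixed point of $\Phi$ at $0$ is that tolerances \emph{pull back contractively}: declaring the admissible error at the end of each inside block to be a small disc, the pull-backs under $\Phi$ at earlier block positions shrink like $3^{-k}$, giving a summable error budget uniformly in the block length. Concretely, I would inductively define nested shrinking neighbourhoods $W_n^{(k)} \supset V_n$ satisfying $f_k(W_n^{(k)}) \subset W_{n+1}^{(k)}$ and use these as tolerance sets in the Runge step; this forces the orbit of $U$ under the eventual $f$ to stay within these neighbourhoods, hence within $D$ on inside blocks and outside $\overline{D}$ on outside blocks.

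Finally, I would verify that $U$ is a wandering Fatou component of $f$. Normality of $\{f^n\}$ on $U$ follows from Montel, since the iterates omit $\partial D$; pairwise disjointness $f^n(U) \cap f^m(U) = \emptyset$ for $n\neq m$ is ensured by the gaps between the $V_n$; and that $U$ is precisely a Fatou component, rather than a proper subset of one, would follow from the regularity of $U$ together with $\overline{U}$ being full, via standard arguments in approximation-theoretic wandering domain constructions that place $\partial U$ inside $J(f)$ using the nearby expansion from the $\Phi$-dynamics.
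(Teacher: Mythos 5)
Your plan follows the same overall strategy as the paper: approximate, via iterated Runge steps, a model map built from the repelling linear dynamics $\Phi(z)=3z$ near $0$ (for the ``inside'' blocks) and translations (for the ``outside'' blocks), and use the contraction of inverse branches towards the repelling fixed point to keep accumulated errors summable. Two steps, however, are genuine gaps rather than elided detail.

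First, your argument that $\partial U\subset J(f)$ --- so that $U$ is a whole Fatou component and not a proper subset of one --- does not work as stated. You appeal to ``nearby expansion from the $\Phi$-dynamics,'' but inside $D$ the map expands uniformly on a full neighbourhood of $\overline{U}$, not just near $\partial U$: all of $\overline{U}$ and a neighbourhood of it iterate through the same tolerance tubes $W_n$, so expansion by itself gives you no way to separate $\partial U$ from $\intt U$. The paper deals with this by an explicit extra device: it fixes nested full compacts $K_j\downarrow\overline{U}$, picks a $2^{-j}$-dense finite set $P_j\subset\partial K_j$, and arranges the model at stage $j$ to send a neighbourhood $V_j$ of $f_{j-1}^{N_j}(P_{j-1})$ to the constant $-\tfrac14$, which lies in the attracting disk $A$ with $f(\overline{A})\subset A$. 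Thus points arbitrarily close to any $z\in\partial\overline{U}$ have bounded orbit (they fall into the basin of $A$) while $\intt(\overline{U})$ has unbounded orbit (since $m_j\to\infty$), forcing $\partial\overline{U}\subset J(f)$. Without some analogue of this the ``$U$ is a Fatou component'' claim is unsupported.

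Second, your Runge compacts $K_k=V_0\cup\cdots\cup V_{N(k)}$ do not exhaust $\C$ (the $V_n$ are a thin sequence of small sets), so the Cauchy estimates $\|f_k-f_{k-1}\|_{K_{k-1}}<\varepsilon_k$ give convergence of $(f_k)$ only on $\bigcup_n V_n$; they do not yield locally uniform convergence on all of $\C$, and hence you have no entire limit function $f$. The paper avoids this by baking the growing disks $\Delta_j=\overline{D(0,m_j-1)}$ into the compacts $T_j$ at each stage and defining the new model $\phi_j$ to coincide with $f_{j-1}$ on $\Delta_j$; since $\bigcup_j\Delta_j=\C$ (here the assumption that $(m_j)$ is strictly increasing is used), the estimates then propagate to all compacts. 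You would need a similar ``freezing on a growing exhaustion'' to make the passage to the limit legitimate. A smaller related point: because you prescribe the $V_n$ a priori, the inside blocks for different $p$ all accumulate at the origin at different scales and can collide unless you separate them (the paper sidesteps this by choosing $C_j$ adaptively as a pullback under the already-built $f_{j-1}$, explicitly avoiding the earlier orbit), but this is fixable by choosing distinct directions for different blocks.
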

	Note that \eqref{aneq} implies that $U$ spends $n_1$ iterates inside $D$, then $m_1$ iterates in its complement, then $n_2$ iterates inside $D$, then $m_2$ in its complement, and so on. In particular we have the following easy corollary. 
	\begin{cor}
		\label{c1}
		Suppose that $\lambda \in [0, 1]$. Then there is a transcendental entire function $f$ with a wandering domain $U$ 
		and a bounded domain $D$ such that 
		\begin{equation}
			\label{anothereq}
			\lim_{k\rightarrow\infty} \frac{\#\{ n \leq k : f^n(U) \subset D \}}{k} = \lambda.
		\end{equation}
	\end{cor}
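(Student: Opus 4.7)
The plan is to apply Theorem~\ref{maintheorem} with sequences $(n_j)$ and $(m_j)$ tailored so that the natural density of $\{n:f^n(U)\subset D\}$ equals $\lambda$, then simply read off the density from the resulting orbit structure. Fix any regular domain $U$ whose closure is a full compact set (the open unit disc, say), and set $N_j\defeq\sum_{i=1}^j(n_i+m_i)$ and $K_j\defeq\sum_{i=1}^j n_i$, so that after $N_j$ iterates exactly $K_j$ have landed inside $D$. Since the counting function $\#\{n\le k:f^n(U)\subset D\}$ increases monotonically from $K_{j-1}$ to $K_j$ as $k$ runs through the $j$th cycle $(N_{j-1},N_j]$, the ratio in \eqref{anothereq} always lies in $[K_{j-1}/k,\,K_j/k]$. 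It therefore suffices to choose $(n_j),(m_j)$ so that
\[
\frac{K_j}{N_j}\longrightarrow \lambda \qquad \text{and}\qquad \frac{n_j+m_j}{N_{j-1}}\longrightarrow 0,
\]
for then both endpoints of that interval converge to $\lambda$ uniformly in $k\in(N_{j-1},N_j]$.

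For $\lambda\in\{0,1\}$ the construction is immediate: take $m_j=j$ (strictly increasing from $m_0=0$) together with $n_j=0$ if $\lambda=0$, or $n_j=j!$ if $\lambda=1$. For $\lambda\in(0,1)$ the naive guess $n_j+m_j$ linear in $j$ fails, because then $m_{j+1}-m_j\approx 1-\lambda<1$ and strict monotonicity cannot be maintained against integer rounding. Instead I would let the cycles grow quadratically: set $n_j\defeq\lfloor\lambda j^2\rfloor$ and $m_j\defeq j^2-n_j$. A direct computation gives $N_j\sim j^3/3$ and $K_j\sim\lambda j^3/3$, so $K_j/N_j\to\lambda$, while $(n_j+m_j)/N_{j-1}=O(1/j)\to 0$, yielding both required limits. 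Moreover $m_{j+1}-m_j\ge(1-\lambda)(2j+1)-1$, which is strictly positive for all sufficiently large $j$; by adjusting finitely many initial terms (or re-indexing and prepending a small strictly increasing initial block) one ensures that $(m_j)_{j\ge 1}$ is strictly increasing with $m_0=0$. Plugging these sequences into Theorem~\ref{maintheorem} produces $f$, $U$ and $D$ satisfying \eqref{anothereq}.

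The only genuine obstacle is the interaction between the density constraint and the monotonicity hypothesis on $(m_j)$: one must make the increments $m_{j+1}-m_j$ outpace the rounding fluctuations that arise from writing $n_j$ as an integer multiple of roughly $\lambda(n_j+m_j)$. Polynomial growth of the cycle length supplies enough slack to resolve this cleanly, and the same polynomial growth makes the relative size of each cycle tend to zero, which is precisely what kills the fluctuation of the Cesàro average inside a cycle. Once these two arithmetic ingredients are in place, the corollary follows essentially by inspection of the definition of natural density.
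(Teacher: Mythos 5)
Your overall strategy is the same as the paper's: apply Theorem~\ref{maintheorem} with explicit sequences $(n_j),(m_j)$ and read off the natural density of $\{n:f^n(U)\subset D\}$ from the cycle structure via the sandwich $[K_{j-1}/k,\,K_j/k]$. The paper's choice, however, is $m_j=j$ for every $\lambda$, together with $n_j=\lceil\tfrac{\lambda}{1-\lambda}\,j\rceil$ when $\lambda<1$ and $n_j=j^2$ when $\lambda=1$. Since $m_j=j$ is already strictly increasing, the monotonicity worry that pushed you to quadratic cycle lengths never arises, and there is nothing to patch; your quadratic choice $n_j+m_j=j^2$ and the ``adjust finitely many terms'' repair are nevertheless valid, so this part of your argument is a workable variant rather than an error. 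There is one genuine lapse: for $\lambda=1$ you take $n_j=j!$, but with $m_j=j$ one has $N_{j-1}\sim(j-1)!$, so $(n_j+m_j)/N_{j-1}\sim j\to\infty$ and your own stated sufficient criterion fails. The conclusion of~\eqref{anothereq} does still hold for that choice, because the sandwich argument really only needs $K_j/N_j\to\lambda$ together with $m_j/N_j\to0$ (the extreme of the ratio over the $j$th cycle is $K_j/(N_j-m_j)$, not $K_j/N_{j-1}$), and both of those are satisfied. But as written the factorial choice is not covered by the criterion you actually justified; replacing $n_j=j!$ by $n_j=j^2$ (exactly as the paper does) satisfies your criterion and closes the gap without changing anything else.
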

	Observe that Corollary \ref{c1} follows from Theorem~\ref{maintheorem} choosing, for example, the sequences,  for $j \in \mathbb{N}$,
	\begin{equation*}
		m_j = j, \quad \text{ and }\quad  n_j\defeq
		\begin{cases}
			j^2,& \lambda=1,\\
			\left\lceil\frac{\lambda}{1-\lambda}\cdot j \right\rceil, &  \text{otherwise.}\\
		\end{cases}
	\end{equation*}
	In the case that $\lambda = 1$, roughly speaking, the wandering domain in Corollary~\ref{c1} spends ``nearly all'' of its iterates in $D$. Clearly, by choosing the sequence $(n_k)$ to tend to infinity quickly, we can ensure the limit in \eqref{anothereq} tends to one as fast as we wish.
	
	It is natural to ask if more ``pathological'' behaviours are possible. Stating the most general possible results is difficult, and not particularly illuminating. 
	We restrict ourselves to sketching a proof of the following.
	\begin{thm}
		\label{fancytheorem}
		Let  $U$ be a regular domain whose closure in $\C$ is a full compact set, let $(z_j)_{1 \leq j \leq p}$ be a collection of  distinct points in $\C\setminus \overline{U}$ and let $(\lambda_j)_{1 \leq j \leq p}$ be a finite sequence of positive real numbers whose sum is at most~$1$. Then there is a transcendental entire function $f$ for which $U$ is a wandering domain and so that for any collection of pairwise disjoint domains $(D_j)_{1 \leq j \leq p}$ with $z_j\in D_j$, 
		\begin{equation*}
			\label{anothereqv2}
			\lim_{k\rightarrow\infty} \frac{\#\{ n \leq k : f^n(U) \subset D_j\}}{k} = \lambda_j, \quad\text{ for } 1 \leq j \leq p.
		\end{equation*}
	\end{thm}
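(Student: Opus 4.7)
The plan is to generalize the construction of Theorem~\ref{maintheorem} by using $p$ distinct repelling fixed points instead of a single one. For each $1 \leq j \leq p$, let $\Phi_j(z) := z_j + 3(z - z_j)$, a M\"obius map with $z_j$ as a repelling fixed point. I would produce a sequence of entire approximations $f_1, f_2, \ldots$ converging locally uniformly to an entire $f$ such that every iterate of $U$ is either ``held'' close to some $z_j$ or is in transit between holding centres, and such that the total time spent near $z_j$ has natural density~$\lambda_j$.

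First, I would fix integer arrays $(a_j^{(k)})_{1\leq j\leq p,\, k\geq 1}$ and $(b^{(k)})_{k\geq 1}$ prescribing that, at the $k$-th cycle, the orbit of $U$ spends $a_1^{(k)}$ consecutive iterates near $z_1$, then $a_2^{(k)}$ iterates near $z_2$, and so on through $z_p$, followed by $b^{(k)}$ iterates in transit outside every domain $D_j$. Taking, for instance, $a_j^{(k)}:=\lceil\lambda_j k^2\rceil$ and $b^{(k)}:=k$ when $\sum_j\lambda_j=1$, and a suitably linear-in-$k$ choice of $b^{(k)}$ when $\sum_j\lambda_j<1$, a computation analogous to the one following Corollary~\ref{c1} shows that the resulting pattern yields natural density $\lambda_j$ in any neighbourhood of each $z_j$.

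Next, I would run the inductive Runge-type construction of Theorem~\ref{maintheorem}, but with $p$ holding centres in place of one. At a step where the orbit is scheduled to stay near $z_j$, I would define $f_{k+1}$ to agree, up to a small approximation error, with $\Phi_j$ on a carefully pulled-back neighbourhood of the current image of~$U$, exploiting the repelling fixed point at $z_j$ exactly as the origin was used in the proof of Theorem~\ref{maintheorem}. At a transition step that moves the orbit from $z_j$ to $z_{j+1}$ (or from $z_p$ back to $z_1$), I would instead define $f_{k+1}$ to match an interpolating polynomial carrying the current image of~$U$ towards the next target. Applying Runge's Theorem to the pairwise disjoint full compact sets assembled from the partial orbit together with the prescribed targets delivers the required entire function at each stage.

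The principal obstacle is coordinating approximation errors across multiple holding centres: corrections made at late stages must not cause past images of $U$ already placed close to, say, $z_1$ to drift towards a different $z_i$ or into the wrong $D_i$. As in the single-centre case, this is resolved by, at each stage $k$, pulling back sufficiently small neighbourhoods of each scheduled $\Phi_j$-orbit through the finitely many previously defined iterates and requiring the next Runge correction to be smaller than the diameters of these pull-backs. With this bookkeeping in place, every iterate assigned to $z_j$ lies inside any preassigned domain $D_j\ni z_j$ for all sufficiently large stages, while the transit iterates can be arranged to stay outside $\bigcup_j\overline{D_j}$; the claimed density statement then follows directly from the choice of the sequences $(a_j^{(k)})$ and $(b^{(k)})$.
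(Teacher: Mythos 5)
Your proposal is essentially the construction the paper uses: replace the single repelling fixed point at the origin by $p$ repelling fixed points $z_j$ with local models $\Phi_j(z)=z_j+3(z-z_j)$, schedule $\lceil\lambda_j k^2\rceil$ iterates near $z_j$ in cycle $k$ with linear-in-$k$ transit times, and run the same Runge-type induction with error control. The paper's sketch makes the same choices $n^l_j=\lceil\lambda_l j^2\rceil$ and $m^l_j=j$ and cycles through the $p$ holding centres in order.

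Two details are worth flagging. First, when you write ``Applying Runge's Theorem,'' the classical version does not suffice here: you must use the strengthened form (Theorem~\ref{thm:Runge}) to force $f(z_j)=z_j$ and $f'(z_j)=3$ exactly at every stage. This is not cosmetic — the theorem's quantifier is over \emph{all} neighbourhoods $D_j$ of $z_j$, however small, so the limit map must genuinely fix each $z_j$; if you only approximate $\Phi_j$, the actual fixed point drifts to some $w_j\neq z_j$ and the orbit then converges to $w_j$, which can lie outside a sufficiently small $D_j$. The paper has an explicit remark making exactly this point. Second, your claim that ``every iterate assigned to $z_j$ lies inside any preassigned domain $D_j$ for all sufficiently large stages'' is slightly too strong: as in Proposition~\ref{prop_properties_f}\eqref{item_p7}, shrinking $D_j$ costs a fixed number $C=C(D_j)$ of iterates at the end of each holding block, which escape $D_j$ while still inside the larger original disc. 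Since the block lengths grow like $k^2$ and the correction is bounded, the density calculation is unaffected, but the bookkeeping should reflect the $\max\{n_{p+1}-C,0\}$ adjustment rather than the unqualified assertion. With these two points tightened, your argument matches the paper's.
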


\begin{remark} For each wandering domain $U$ constructed in this paper, and for every $R>0$, there exists $n\defeq n(R, U)\in \N$ such that $\inf\{\vert z \vert \colon z\in f^n(U)\}>R$. Thus, it remains an open question whether wandering domains with bounded orbit exist.
\end{remark}

\subsection*{Acknowledgments} The question of whether wandering domains such as in Corollary~\ref{c1} exist was raised when the first author visited the Analysis group at the University of St Andrews. She thanks them for their hospitality. 

	\section{Preliminaries}
	\subsection{Notation}
	We denote the closure of a set $A \subset \C$ by $\overline{A}$, and its interior by  $\operatorname{int}({A})$. For $c \in \C$, we denote the translate of $A$ by
	$
	A + c \defeq \{ z + c : z \in A \}$, and the Euclidean distance between $c$ and $A$ by $\dist(c,A)$. For sets $A, B\subset \C$, we write $A\Subset B$ to indicate that $A$ is \textit{compactly contained} in $B$, that is, $\overline{A}$ is compact and $ \overline{A}\subseteq \operatorname{int}({B})$. For $a\in \C$ and $r>0$, we denote by $D(a,r)$ the open disk of radius $r$ centred at $a$.

	\subsection{Approximation}
	We will use the following stronger version of Runge's classical approximation theorem (\cite{Runge_85}) as stated in \cite[Theorem 4]{BocThaler_21}; see the appendix in \cite{BocThaler_21} for a proof.
	\begin{thm}\label{thm:Runge}
		Let $A_1, \ldots, A_n \subseteq \C$ be pairwise disjoint and full compact sets. For each $1\leq k\leq n$, let $L_k\subset A_k$ be a finite set of points, and $h_k \colon A_k\to \C$ be a holomorphic function. Then for every $\eps >0$, there exists an entire function $f$ such that, for all $1\leq k\leq n$,
			 \begin{align*}
			&|f(z)-h_k(z)|<\eps, \quad\quad\quad\quad\quad \; \text{ for } z\in A_k; \quad  \text{ and } \\
			 & f(z)=h_k(z), \; f'(z)=h'_k(z), \quad \text{ for } z\in L_k.
		\end{align*}
	\end{thm}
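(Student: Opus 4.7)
The plan is to reduce the statement to a single application of the classical Runge theorem (i.e., without interpolation conditions) by a standard quotient trick. The interpolation data will be carried exactly by a Hermite polynomial, while the approximating entire function will be arranged to vanish to order two at each point of $L := L_1 \cup \cdots \cup L_n$, so that it cannot spoil the interpolation.

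First, I would take the Hermite interpolating polynomial $P$ of degree at most $2|L|-1$ with $P(z) = h_k(z)$ and $P'(z) = h'_k(z)$ for every $z \in L_k$ and every $k$; this polynomial exists and is unique by standard linear algebra, and the prescription is consistent because the pairwise disjointness of the $A_k$ forces the $L_k$ to partition $L$. Next, introduce the vanishing factor
\[
Q(z) := \prod_{z_j \in L}(z-z_j)^2,
\]
and set $\phi_k := h_k - P$ on an open neighborhood $V_k$ of $A_k$ on which $h_k$ is holomorphic, chosen small enough that the $V_k$ are pairwise disjoint and $V_k \cap L = L_k$ (possible because $A_k \cap A_j = \emptyset$ for $j \ne k$). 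By construction $\phi_k$ vanishes to order at least two at every point of $L_k$, while $Q$ vanishes to order exactly two there and is non-vanishing elsewhere on $V_k$. Thus the quotient $\psi_k := \phi_k/Q$, a priori holomorphic on $V_k \setminus L_k$, extends holomorphically across $L_k$ by Riemann's theorem on removable singularities, and the $\psi_k$ glue into a single function $\psi$ that is holomorphic on a neighborhood of $K := A_1 \cup \cdots \cup A_n$.

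Using the standard fact that a pairwise disjoint union of full compact sets in $\C$ is full, I would then apply the classical Runge theorem (in its polynomial-approximation form) to $\psi$ on $K$, producing an entire function $\widetilde{g}$ with $|\widetilde{g}-\psi| < \eps/C$ on $K$, where $C := \max_k \|Q\|_{A_k}$. The candidate is
\[
f := Q \cdot \widetilde{g} + P.
\]
Since $Q$ and $Q'$ both vanish on $L$, the term $Q \cdot \widetilde g$ does not alter values or derivatives at points of $L$, so $f(z) = P(z) = h_k(z)$ and $f'(z) = P'(z) = h_k'(z)$ for every $z \in L_k$. For $z \in A_k$ one obtains
\[
|f(z) - h_k(z)| = |Q(z)| \cdot |\widetilde g(z) - \psi(z)| \le \|Q\|_{A_k} \cdot \eps/C \le \eps,
\]
which completes the proof.

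The main conceptual step is the quotient trick, which decouples the interpolation problem from the approximation problem; the crucial technical point is that the double order of vanishing in $Q$ is precisely what makes the singularities of $\phi_k/Q$ at $L_k$ removable, so that plain Runge can be applied to a genuinely holomorphic function on $K$. The only other point requiring care is the fullness of $K$, which follows from the hypothesis that each $A_k$ is full together with the disjointness assumption.
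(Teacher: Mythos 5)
Your proof is correct and self-contained. The paper does not prove this statement itself but cites \cite[Theorem 4]{BocThaler_21} (and its appendix), so there is no in-text argument to compare against; your Hermite-interpolant-plus-quotient reduction to classical Runge is the standard route to such a statement, and the one genuinely delicate point --- that a finite pairwise-disjoint union of full compact sets is itself full, so that polynomial Runge applies on $K=\bigcup_k A_k$ --- you correctly flag and use.
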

	
	Some of our arguments will follow the constructions of wandering domains in \cite{BocThaler_21, lasse_DJ_Erem_22}. We shall borrow from \cite{lasse_DJ_Erem_22} the following two lemmas on approximation of univalent functions and their iterates. 	
	\begin{lem}[{\cite[version of Lemma 2.3]{lasse_DJ_Erem_22}}]
		\label{lem_univalent} 
		Let $U,V\subseteq \C$ be open,  and let $\phi \colon U\to V$ be a conformal isomorphism. Let $A\subseteq U$ be a closed set such that $\dist(A,\partial U)>0$ and $\mu\defeq  \inf_{z\in A}\vert\phi'(z)\vert>0$. Then there is $\eps >0$ with the  following  property: if $f\colon U\to \C$ is holomorphic with $|f(z) - \phi(z)|\leq \eps$ for all $z\in U$, then $f$ is univalent on $A$, with $f(A)\subseteq V$. Moreover, $\vert f'(z)\vert >\mu/2$, for $z\in A$.
	\end{lem}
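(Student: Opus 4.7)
The plan is to derive the three conclusions---$f(A)\subseteq V$, univalence of $f$ on $A$, and the derivative bound $|f'(z)|>\mu/2$ on $A$---from a single sufficiently small choice of $\eps$, by combining Cauchy estimates, Koebe distortion, and a Hurwitz-type compactness argument.

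First I would extract a uniform collar around $A$: setting $\delta:=\dist(A,\partial U)>0$, every closed disk $\overline{D(z,\delta/2)}$ with $z\in A$ lies inside $U$. Applying the Cauchy integral formula to $f-\phi$ on the boundary of this disk yields
$$
|f'(z)-\phi'(z)|\leq \frac{2\eps}{\delta}\quad\text{for all }z\in A,
$$
so any $\eps<\mu\delta/4$ already produces the derivative bound $|f'(z)|>\mu/2$ on $A$.

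To guarantee $f(A)\subseteq V$, I would apply Koebe's one-quarter theorem to the univalent map $\phi$ restricted to $D(z,\delta)\subseteq U$: its image contains $D(\phi(z),\delta|\phi'(z)|/4)\subseteq V$, and hence $\dist(\phi(z),\partial V)\geq \mu\delta/4$ for every $z\in A$. Since $|f(z)-\phi(z)|\leq \eps$, the same bound $\eps\leq \mu\delta/4$ forces $f(z)\in V$.

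The hard part, and the main obstacle, is univalence, which I would establish by contradiction. If no admissible $\eps$ exists, there are $\eps_n\downarrow 0$ and holomorphic $f_n\colon U\to\C$ with $|f_n(z)-\phi(z)|\leq \eps_n$ on $U$ such that $f_n(z_n^1)=f_n(z_n^2)$ for some distinct $z_n^1,z_n^2\in A$. In the clean case where $(z_n^1),(z_n^2)$ admit convergent subsequences in $A$---in particular when $A$ is compact---their limits coincide by injectivity of $\phi$, and Hurwitz's theorem applied to $f_n-f_n(z_n^1)\to \phi-\phi(z)$ forces a multiple zero of $\phi-\phi(z)$ at the common limit, contradicting $|\phi'(z)|\geq \mu>0$. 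For unbounded $A$, where this compactness is unavailable, I would instead work with $g_n:=\phi^{-1}\circ f_n$, which by the previous step is defined and holomorphic on a fixed neighbourhood of $A$ in $U$. A mean-value estimate together with $|(\phi^{-1})'|\leq 1/\mu$ near $\phi(A)$ gives $|g_n(z)-z|\leq 2\eps_n/\mu$ on $A$, while a Cauchy estimate on $g_n-\mathrm{id}$ yields $|g_n'(z)-1|\to 0$ uniformly on $A$; these two bounds together preclude both short-range and long-range collisions, completing univalence.
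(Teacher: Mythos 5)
The paper does not prove this lemma itself; it cites it as a (slight) variant of \cite[Lemma~2.3]{lasse_DJ_Erem_22}, so there is no in-paper proof to compare against. Judged on its own merits, your argument is sound and follows the expected route: Cauchy estimates for the derivative bound, the Koebe one-quarter theorem for $f(A)\subseteq V$, and the conjugation $g=\phi^{-1}\circ f$ being $C^1$-close to the identity for univalence. Two points deserve tightening. First, the estimate $|(\phi^{-1})'|\leq 1/\mu$ holds exactly only on $\phi(A)$; on a neighbourhood of $\phi(A)$ you only get $|(\phi^{-1})'|\leq C/\mu$ with a Koebe distortion constant $C>1$, which changes your numerical bound $|g_n(z)-z|\leq 2\eps_n/\mu$ but not the argument (any $O(\eps_n)$ bound suffices). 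Second, the Cauchy estimate giving $|g_n'-1|\to 0$ must be arranged to hold on a uniform \emph{neighbourhood} of $A$ (say $\{z:\dist(z,A)\leq\delta/8\}$), not merely on $A$, because the short-range collision step integrates $g_n'-1$ along a segment between two points of $A$ that generally leaves $A$; this is easily done by running the Cauchy estimate on disks of radius $\delta/8$ inside the $\delta/4$-neighbourhood, but it needs saying. Finally, note that your $g_n$-based dichotomy already handles all cases (compact or not, convergent collision sequence or not), so the separate Hurwitz case is redundant and you could present the univalence step directly, without a contradiction, by showing $g$ is injective on $A$ once $\sup|g-\mathrm{id}|$ and $\sup|g'-1|$ on the neighbourhood are small enough.
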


		\begin{lem}[{\cite[version of Corollary 2.7]{lasse_DJ_Erem_22}}]
		\label{lem_iterates} 
		Let $U\subset\C$ be open, and let $g \colon U\to \C$ be holomorphic. Suppose that $G\subset U$ is open, $K\subseteq G$ is compact and $g^n$ is defined and univalent on $G$ for some $n\geq 1$. Then, for every $\eps>0$, there is $\delta>0$ with the following property. For any holomorphic $f\colon U\to \C$ with $\vert f(z)-g(z)\vert<\delta$ for all $z\in U$, $f^n$ is defined and univalent on $K$, and $\vert f^k(z)-g^k(z)\vert<\eps$ on $K$, for $k\leq n$.
\end{lem}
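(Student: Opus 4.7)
The plan is to prove the lemma by a forward induction on the iteration step, using compactness and uniform continuity of $g$ on compact subsets of $U$ to propagate closeness of $f^k$ to $g^k$. First I would set up a telescope of nested compact neighborhoods adapted to the dynamics of $g$; then I would propagate Lipschitz-style bounds with the triangle inequality; and finally I would deduce univalence of $f^n$ on $K$ via Lemma~\ref{lem_univalent}.

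Since $g^n$ is univalent on $G$, each intermediate map $g^k\vert_G$ is univalent for $0\leq k\leq n$, and each set $V_k\defeq g^k(G)$ is open by the open mapping theorem; moreover $V_k\subset U$ for $k\leq n-1$ because $g$ must be defined on $g^{n-1}(G)$. I would choose a compact neighborhood $G'$ of $K$ with $K\Subset G'\Subset G$, and then compact sets $K^{(0)}\defeq G'$ and $K^{(k)}\subset V_k$ for $1\leq k\leq n$ satisfying $g(K^{(k-1)})\Subset \intt(K^{(k)})\Subset V_k$. This is possible step by step because $g(K^{(k-1)})$ is compact and contained in $V_k$. Let $L_k$ denote a Lipschitz constant for $g$ on $K^{(k)}$ for $0\leq k\leq n-1$, which is finite by boundedness of $g'$ on the compact set $K^{(k)}\Subset U$.

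Next, by induction on $k$, I would show that if $\delta>0$ is chosen sufficiently small, then for every holomorphic $f\colon U\to\C$ with $\vert f-g\vert<\delta$ on $U$ and each $0\leq k\leq n$, the iterate $f^k$ is defined on $G'$, maps it into $\intt(K^{(k)})$, and satisfies $\sup_{z\in G'}\vert f^k(z)-g^k(z)\vert\leq C_k\delta$, where $C_0=0$ and $C_{k+1}=1+L_kC_k$. The inductive step follows from the triangle inequality $\vert f^{k+1}(z)-g^{k+1}(z)\vert\leq \vert f(f^k(z))-g(f^k(z))\vert+\vert g(f^k(z))-g(g^k(z))\vert<\delta+L_k\vert f^k(z)-g^k(z)\vert$, combined with the observation that $f^{k+1}(z)$ lies within distance $\delta$ of $g(f^k(z))\in g(\intt(K^{(k)}))\Subset \intt(K^{(k+1)})$, so $f^{k+1}(z)\in\intt(K^{(k+1)})$ once $\delta<\dist(g(K^{(k)}),\C\setminus\intt(K^{(k+1)}))$. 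Taking $\delta$ small enough that $C_n\delta<\eps$ gives the asserted closeness on $K\subset G'$.

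Finally, for univalence of $f^n$ on $K$, I would invoke Lemma~\ref{lem_univalent} applied to the conformal isomorphism $\phi=g^n\colon G'\to g^n(G')$ with closed set $A=K$: the derivative of $g^n$ is nonzero on $G\supset G'$ by univalence, hence bounded below on the compact set $K$, and $\dist(K,\partial G')>0$ by construction. Shrinking $\delta$ further if necessary so that $\sup_{z\in G'}\vert f^n(z)-g^n(z)\vert$ is below the threshold provided by Lemma~\ref{lem_univalent} yields univalence of $f^n$ on $K$. The main obstacle is purely bookkeeping: the telescope $(G',K^{(1)},\ldots,K^{(n)})$, the Lipschitz constants $L_k$, and the univalence threshold depend only on $g$, $K$, and $G$ and must be fixed before a single compatible $\delta$ is selected; no individual step is conceptually hard, but the quantifier order requires care.
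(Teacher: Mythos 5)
The paper does not prove Lemma~\ref{lem_iterates}; it is stated as a cited version of a result from the reference, so there is no in-paper argument to compare against. Your proof is correct and follows the standard approach for such persistence-of-iterates statements: build a telescope of nested compacta $K^{(0)},\ldots,K^{(n)}$ along the $g$-orbit of a compact neighbourhood $G'$ of $K$ so that each stage has room for perturbation, propagate the error $\sup_{G'}\vert f^k-g^k\vert$ by induction via the triangle inequality and a Lipschitz bound for $g$ near $K^{(k)}$, and then feed the resulting $C^0$-closeness of $f^n$ to $g^n$ on $G'$ into Lemma~\ref{lem_univalent} to get univalence on $K$. The quantifier order is handled correctly: the telescope, the constants $L_k$, $C_k$, and the univalence threshold are all fixed from $g$, $K$, $G$ before $\delta$ is chosen. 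Two small points are worth tightening in a final write-up. First, $G'$ is compact, so when invoking Lemma~\ref{lem_univalent} the conformal isomorphism should be $g^n$ on the open set $\operatorname{int}(G')$ (or on $G$ itself), with $A=K$; as stated you apply it with the compact set $G'$ as the domain. Second, a single global Lipschitz constant for $g$ on a possibly non-convex compact $K^{(k)}$ does not follow purely from boundedness of $g'$ on $K^{(k)}$; the clean justification is to take $M_k=\sup\vert g'\vert$ over a compact $r$-neighbourhood of $K^{(k)}$ still inside $U$, and observe that in the inductive step $f^k(z)$ and $g^k(z)$ lie within $C_k\delta<r$ of one another, so the connecting segment stays in that neighbourhood and $\vert g(f^k(z))-g(g^k(z))\vert\leq M_k\vert f^k(z)-g^k(z)\vert$ holds (alternatively, note that $g$ is in fact globally Lipschitz on any compact $K^{(k)}\Subset U$ by combining a derivative bound for nearby points with a crude bound $2\sup\vert g\vert/r$ for far-apart points). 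Neither point affects the validity of the approach.
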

	

Finally, we will use the following result on plane topology.	
\begin{lem}[{\cite[Lemma 2.9]{lasse_DJ_Erem_22}}] \label{lem_compacts} Let $K\subseteq \C$ be a compact and full set. Then there exists an infinite sequence $(K_j)_{j\geq 0}$ of compact and full sets such that $K_j\subseteq \operatorname{int}(K_{j-1})$ for all $j\in \N$ and $K=\bigcap_{j\geq 0} K_j$. In addition, each $K_j$ may be chosen to be bounded by a finite disjoint union of closed Jordan curves.	
\end{lem}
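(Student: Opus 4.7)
The plan is to construct $(K_j)$ inductively as filled-in smooth sublevel sets of the distance function to $K$. Write $d(z) \defeq \dist(z, K)$, $V_\eps \defeq \{z\in\C : d(z) \leq \eps\}$, and, for any compact $A\subset\C$, let $\hat A$ denote $A$ together with all bounded components of $\C\setminus A$, so that $\hat A$ is the smallest compact full set containing $A$. The whole construction hinges on the claim that $\bigcap_{\eps>0} \hat V_\eps = K$: while the sets $V_\eps$ trivially shrink to $K$, the filled hulls $\hat V_\eps$ could a priori retain large pockets almost enclosed by $K$, and it is precisely the fullness of $K$ that rules this out. This claim is the principal obstacle; granted it, the rest is essentially routine bookkeeping.

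For the claim, fix $z_0 \in \C\setminus K$. Since $K$ is full, $\C\setminus K$ is connected, so there is a path $\gamma \subset \C\setminus K$ from $z_0$ to a point of modulus larger than any fixed $R$ with $K\subset D(0,R)$. The compact set $\gamma$ has positive distance $c\defeq \dist(\gamma,K)>0$, so for any $\eps<c$ one has $\gamma \subset \C\setminus V_\eps$; combined with $V_\eps\subset D(0,R)$ for small $\eps$, this places $z_0$ in the unbounded component of $\C\setminus V_\eps$, so $z_0 \notin \hat V_\eps$. A standard compactness argument, using that all $\hat V_\eps$ with $\eps\leq\eps_0$ are contained in the fixed compact set $\hat V_{\eps_0}$, upgrades this pointwise fact to the uniform statement that every neighborhood of $K$ contains some $\hat V_\eps$.

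With this in hand I build the $K_j$ inductively. Set $K_0\defeq \overline{D(0,R)}$. Given $K_{j-1}$, choose $\eps_j \in (0, 1/j)$ with $\hat V_{\eps_j} \subseteq \operatorname{int}(K_{j-1})$. Let $\phi_j\colon\C\to[0,\infty)$ be a $C^\infty$ mollification of $d$ satisfying $|\phi_j - d|\leq \eps_j/10$, and, by Sard's theorem, pick a regular value $t_j \in (\eps_j/3, \eps_j/2)$ of $\phi_j$. Set $S_j\defeq \{\phi_j\leq t_j\}$ and $K_j\defeq \hat S_j$. Then $S_j$ is compact with $K\subset \operatorname{int}(S_j) \subseteq V_{\eps_j}$, and $\partial S_j = \{\phi_j=t_j\}$ is a smooth compact boundaryless $1$-manifold in $\C$, hence a finite disjoint union of smooth Jordan curves. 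Since $\C\setminus K_j$ equals the unbounded component of $\C\setminus S_j$, the boundary $\partial K_j$ is the subcollection of those Jordan curves of $\partial S_j$ that bound this component, and is therefore itself a finite disjoint union of Jordan curves. Monotonicity of the hull operation $A\mapsto\hat A$ yields $K_j = \hat S_j \subseteq \hat V_{\eps_j} \subseteq \operatorname{int}(K_{j-1})$, while $K\subseteq \operatorname{int}(S_j)\subseteq K_j$. Combining with the key claim, $K \subseteq \bigcap_{j\geq 0} K_j \subseteq \bigcap_j \hat V_{\eps_j} = K$, completing the construction.
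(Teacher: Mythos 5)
The paper does not prove this lemma itself; it is cited verbatim from \cite[Lemma 2.9]{lasse_DJ_Erem_22}, so I can only assess your argument on its own terms. That said, your proof is correct, and it is a clean self-contained version of what such arguments typically look like: take shrinking closed $\eps$-neighbourhoods $V_\eps$ of $K$, fill in holes, and smooth the boundary.

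The key observation you correctly isolate and prove is that $\bigcap_{\eps>0}\hat V_\eps=K$; the fullness of $K$ is used exactly here, via the path $\gamma$ from a point $z_0\notin K$ to infinity staying in $\C\setminus K$, so that for $\eps<\dist(\gamma,K)$ the point $z_0$ lies in the unbounded complementary component of $V_\eps$. The compactness upgrade to ``every open neighbourhood of $K$ contains some $\hat V_\eps$'' is the usual nested-compacta argument and is what makes the inductive choice of $\eps_j$ with $\hat V_{\eps_j}\subset\operatorname{int}(K_{j-1})$ possible. Your mechanism for getting Jordan-curve boundaries is also sound: the mollified distance function $\phi_j$ stays within $\eps_j/10$ of $d$, Sard supplies a regular value $t_j\in(\eps_j/3,\eps_j/2)$, so $S_j=\{\phi_j\le t_j\}$ is sandwiched between $\{d<\eps_j/10\}\supset K$ and $V_{\eps_j}$, and $\partial S_j=\{\phi_j=t_j\}$ is a compact smooth $1$-manifold, i.e.\ a finite disjoint union of Jordan curves. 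Your final step, that $\partial\hat S_j$ is a union of some of these curves, is correct but you state it slightly tersely; the point is that $\partial\Omega\cap J_i$ (with $\Omega$ the unbounded component of $\C\setminus S_j$) is open and closed in each curve $J_i$, because near a regular boundary point the side $\{\phi_j>t_j\}$ is locally connected and hence lies entirely in $\Omega$ once it meets $\Omega$. All told, the argument is complete and correct; no gap.
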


\section{Proof of Theorem~\ref{maintheorem}}\label{sec_3}

\subsection{Behaviour near the origin}
We start by describing the behaviour of our map near the origin. Recall the map $\Phi: z \mapsto 3z$.
\begin{lem}
	\label{pullbacklemma}
	There exist $0 < r_1 < 1/9 < r_2 < r_3 < 1/3$ and $0<\eps <r_2/2$ with the following properties. Define
	 \begin{align*}
		\maindisc&\defeq  D(0,r_1), \quad  \text{ and } \\
		\mainreef&\defeq \left\{ z : |z| \in (r_2, r_3) \text{ and } |\operatorname{ arg } z| <  \frac{\pi}{4} \right\}.
	\end{align*}
	If $f$ is holomorphic on $D(0,\frac19)$, with $|f(z) - \Phi(z)| < \eps$, for $z \in \maindisc$, then $f$ is univalent on $\maindisc$. Moreover, for each $n \in \mathbb{N}$ there is a unique component of $f^{-n}(\mainreef)$ contained in $\maindisc$, and these preimage components are pairwise disjoint. 
	
	If, in addition, $f(0)=0$, then there exists a decreasing sequence $\rho_n\to 0$ such that $f^{-n}(B_0)\cap D\subset D(0,\rho_n)$.
\end{lem}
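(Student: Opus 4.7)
My plan is as follows: first, choose the geometric constants and use Lemma~\ref{lem_univalent} to establish univalence of $f$ on $\overline{\maindisc}$ and closeness of its inverse $\psi^{-1}$ to $\Phi^{-1}$; second, construct the preimage components $V_n$ inductively via this inverse branch; and third, analyse their disjointness and shrinkage via Koenigs' linearization.

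I would take $r_1 \in (1/27,\, 1/9)$, $r_2 \in (1/9,\, 3r_1)$, and $r_3 \in (r_2,\, 3r_1)$; the key inequality $r_3 < 3r_1 < 3r_2$ ensures that the model sectors $\Phi^{-n}(\mainreef) = \{r_2/3^n < |z| < r_3/3^n,\ |\arg z|<\pi/4\}$ are compactly contained in $\maindisc$ and pairwise disjoint. Picking an open disk $U$ with $\overline{\maindisc} \subset U \subset D(0,\tfrac{1}{9})$, I would apply Lemma~\ref{lem_univalent} to $\Phi|_U$ (so $\mu = 3$) and $A = \overline{\maindisc}$, producing $\eps_1 > 0$ so that whenever $|f - \Phi| < \eps_1$ on $U$, $f$ is univalent on $\overline{\maindisc}$ with $|f'| > 3/2$. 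I would take $\eps < \min(\eps_1,\, (3r_1 - r_3)/2,\, r_2/2)$; the mismatch between closeness on $\maindisc$ (the hypothesis) and on the slightly larger $U$ (what Lemma~\ref{lem_univalent} uses) is absorbed by a standard Cauchy estimate on a suitable sub-disk, and a Rouch\'e-type argument on the image then ensures $f(\maindisc) \supset \mainreef$. Writing $\psi \defeq f|_{\maindisc}$, the identity $\psi^{-1}(z) - \Phi^{-1}(z) = -(f - \Phi)(\psi^{-1}(z))/3$ gives $|\psi^{-1} - \Phi^{-1}| < \eps/3$ on $f(\maindisc)$.

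I would then define $V_n \defeq \psi^{-n}(\mainreef) \subset \maindisc$ inductively; this is valid since $V_{n-1} \subset \maindisc \subset f(\maindisc)$. Each $V_n$ is open, connected and satisfies $f^n(V_n) = \mainreef$. Since $\psi^n$ is a homeomorphism that maps $\partial V_n$ onto $\partial \mainreef$, disjoint from $\mainreef$, $V_n$ is closed in $f^{-n}(\mainreef)$ and hence a connected component. Uniqueness I would prove by induction: for any component $W \subset \maindisc$ of $f^{-n}(\mainreef)$, $f(W) \subset f(\maindisc)$ is connected and lies in $f^{-(n-1)}(\mainreef)$; the closeness of $\psi$ to $\Phi$ forces $f(W)$ into the distinguished component $V_{n-1}$, whence $W = \psi^{-1}(V_{n-1}) = V_n$.

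For disjointness and shrinkage, observe that $\psi$ has a unique attracting fixed point $w^* \in \maindisc$ (close to~$0$) with multiplier close to $3$; Koenigs' theorem then provides a conformal conjugacy $\phi$ on $f(\maindisc)$ with $\phi \circ \psi^{-1} \circ \phi^{-1}(w) = \lambda w$ and $|\lambda|$ close to $1/3$, so that $V_n = \phi^{-1}(\lambda^n \phi(\mainreef))$. The inequality $|\lambda| < r_2/r_3$ (which follows from $r_3 < 3r_2$ for $\eps$ small) makes the scaled sectors $\lambda^n \phi(\mainreef)$ pairwise disjoint annular sectors, hence so are the $V_n$. If additionally $f(0) = 0$ then $w^* = 0$, so $V_n \subset \phi^{-1}(D(0,\, r_3 |\lambda|^n))$, which lies in $D(0, \rho_n)$ for some decreasing sequence $\rho_n \to 0$ coming from the modulus of continuity of $\phi^{-1}$ at the origin. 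The hardest step, I expect, is the uniqueness argument: a priori, spurious components of $f^{-n}(\mainreef)$ could arise in $\maindisc$ from preimages of $\mainreef$ lying in $f(\maindisc) \setminus \maindisc$, and ruling these out requires careful use of the approximation and of the angular and radial structure of $\mainreef$.
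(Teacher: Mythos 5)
Your plan matches the paper's for the setup: fix the $r_i$ so that $\maindisc\cup\mainreef\Subset\Phi(\maindisc)$, invoke Lemma~\ref{lem_univalent} to obtain $\eps$ making $f$ univalent on $\maindisc$ with $(\maindisc\cup\mainreef)\Subset f(\maindisc)$, and define the distinguished preimage components $V_n$ by iterating the inverse branch $\psi^{-1}\colon\maindisc\cup\mainreef\to\maindisc$. Where you diverge is in the last two claims, which you settle with Koenigs' linearization at the (repelling for $\psi$, attracting for $\psi^{-1}$) fixed point, while the paper uses two much lighter observations. For pairwise disjointness, the paper simply notes that for $n<m$ a common point of $V_n$ and $V_m$ would, after $n$ iterations, land both in $\mainreef$ (from $V_n$) and in $\psi^{-(m-n)}(\mainreef)\subset\maindisc$ (from $V_m$), contradicting $\maindisc\cap\mainreef=\emptyset$. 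For shrinkage, Lemma~\ref{lem_univalent} already gives $|f'|>3/2$ on $\maindisc$, so $\psi^{-1}$ is a uniform contraction fixing~$0$ once $f(0)=0$, and the Banach fixed-point theorem yields $\rho_n\to 0$ directly. Your Koenigs route is correct in principle, but it buys nothing here and it asks for extra work you wave at (extending the local linearizer to all of $f(\maindisc)$, checking injectivity of that extension, and verifying $\phi(\mainreef)$ sits in an annulus of modulus large enough relative to $|\lambda|$ for the scaled copies to stay disjoint); the contraction/set-disjointness argument sidesteps all of this. The uniqueness concern you flag at the end — spurious components whose intermediate iterates leave $\maindisc$ — is genuine but is not something your sketch resolves either; the paper is equally terse on this, and the intended reading is that the distinguished component is exactly $\psi^{-n}(\mainreef)$, i.e.\ the one whose whole forward orbit stays inside $\maindisc$ until it lands in $\mainreef$.
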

\begin{proof}
Set $r_2 = \frac{5}{27}$, $r_3 = \frac{7}{27}$. Choose $r_1 \in (0,\frac19)$ sufficiently close to $\frac19$ to ensure that $(\maindisc \cup \mainreef) \Subset \Phi(\maindisc)$. We can then apply Lemma~\ref{lem_univalent} to obtain a value $\eps > 0$ such that any function $f$ holomorphic on $D(0, 1/9)$ and within $\eps$ of $\Psi$ on $\maindisc$ is univalent there. By decreasing $\eps$ slightly further, if necessary, we can guarantee that $ (D\cup B_0)\Subset f(D)$, and so the next part of the result follows. Disjointness of the preimages of $\mainreef$ follows from injectivity of $f$ together with the fact that $\maindisc$ and $\mainreef$ are disjoint. Note that by Lemma \ref{lem_univalent}, $\vert f'(z)\vert >3/2$ for all $z\in D$, and so its inverse map is a contraction with derivative bounded away from $1$. Thus, whenever $f(0)=0$, the final claim follows from the Banach fixed-point theorem.
\end{proof}

\subsection{Setting and idea of proof}
Let us fix $\maindisc$, $B_0$ and $\eps>0$ as in Lemma \ref{pullbacklemma}, so that the conclusions of the statement hold, and set 
\begin{equation}\label{eq_setA}
A\defeq D\left(-\frac14, \frac19 \right).
\end{equation}
Let $N_0\defeq 0$, and for each $j\in \N$, define $$N_j\defeq \sum_{i=1}^{j}n_i+m_i,$$
	where $(n_j)_{j \in \mathbb{N}}, (m_j)_{j \in \mathbb{N}}$ are given sequences of natural numbers, $n_0=m_0=0$, and $(m_j)_{j \in \mathbb{N}}$ is strictly increasing.
	In order to simplify our exposition, we will also assume that $m_1>1$.
	
	 Let  $U$ be a fixed regular domain whose closure $K\defeq \overline{U}$ is a full compact set. Using Lemma \ref{lem_compacts} we can choose a sequence of compact sets $(K_j)_{j\geq 0}$
	with the following properties:
	\begin{itemize}
		\item $K_{j+1}\subset \text{int}(K_{j})$ for all $j\in \N$,
		\item $K=\bigcap^\infty_{j=0} K_j$.
	\end{itemize}
	In addition, by applying an affine transformation, we may assume without loss of generality that $0\notin K_0$. We can similarly assume that
\begin{equation}\label{eq_K0}
\Phi^{j}(K_0)\Subset \maindisc, \text{ for } 0\leq j \leq n_1, \quad \text{ and }\quad  \Phi^{n_1+1}(K_0)\Subset B_0.
\end{equation}

For each $j\in \N$, let us choose a full, compact, $2^{-j}$-dense subset $P_j\subset \partial K_j$. In other words, $P_j$ is such that for any $z\in \partial K_j$, $\dist(z,P_j)\leq 2^{-j}$. In particular, $\partial K$ is the Hausdorff limit of the sets $P_j$. These sets will belong to attracting basins of the map $f$ and are introduced to ensure that $\partial K\subset J(f)$, so that our wandering set has the prescribed shape.

In a rough sense, the orbit of $K$ under $f$ will be as follows. After iterating $n_1$ times in $D$, where  $f$ approximates $\Phi$, $f^{n_1}(K)$ will iterate $m_1$ times in translated copies of $B_0$, where $f$ acts approximately like the translation $z\mapsto z+1$. That is, $f^{n_1+\ell}(K)\subset B_0+\ell-1$ for $1\leq \ell\leq m_1$. In particular, $f^{N_1}(K)\subset B_0+m_1-1$. At this point, our construction will ensure that  $f^{N_1+\ell}(K)\subset D$ for $1\leq \ell \leq n_2$ while, again,  $f^{N_1+n_2+\ell}(K)\subset B_0+\ell-1$ for $1\leq \ell \leq m_2$. Note that the crucial steps  in the construction occur in the sets $B_0+m_j-1$, for $j\geq 1$. This is why assuming $m_1>1$ simplifies exposition without loosing significant generality. A finer analysis on where the iterates of $K$ lie when outside $D$ will be required, and thus, we will define inductively a collection of sets $(B_j)_{j\in \N}$, such that $B_j\subset B_0+m_j-1$ for $j\geq 1$; see Figure \ref{fig:main}. Additionally, we will use the following notation:
	$$\widehat{B}_j\defeq B_{j}+m_{j+1}-\max\{m_j,1\},\quad \quad \text{ for } j\geq 0,$$
and we will show that $\widehat{B}_j\Subset B_0+m_{j+1}-1$. 
In particular, we will be able to guarantee injectivity of our map $f$ on $D$ and on the collection of compact sets $(B_j)_{j\in \N}$ together with some of their translates; namely, on 
$$U_j\defeq\overline{D} \cup \bigcup^j_{l=0} \bigcup^{m_{l+1}-\max\{m_l,1\}-1}_{k=0}(\overline{B_l}+k), \quad \quad \text{ for } j\geq 0.$$

Finally, we set the collection of nested closed disks 
	$$\Delta_0\defeq \{0\} \quad  \text{ and } \quad \Delta_j\defeq \overline{D(0,m_j-1)}, \quad \text{ for all } j\geq 1.$$
	Note that, since $\eps<r_2/2<r_3/2 <1/6$, 
	\begin{equation}\label{eq_B0}
     B_0+m_j-2+\eps\Subset \Delta_j, \quad \text{ while } \quad B_0+m_j-1-\eps \Subset \C\setminus  \Delta_j \quad \text{ for each } j\geq 1.
	\end{equation}

\begin{figure}[htp]
	\centering
	\def\svgwidth{\linewidth}
	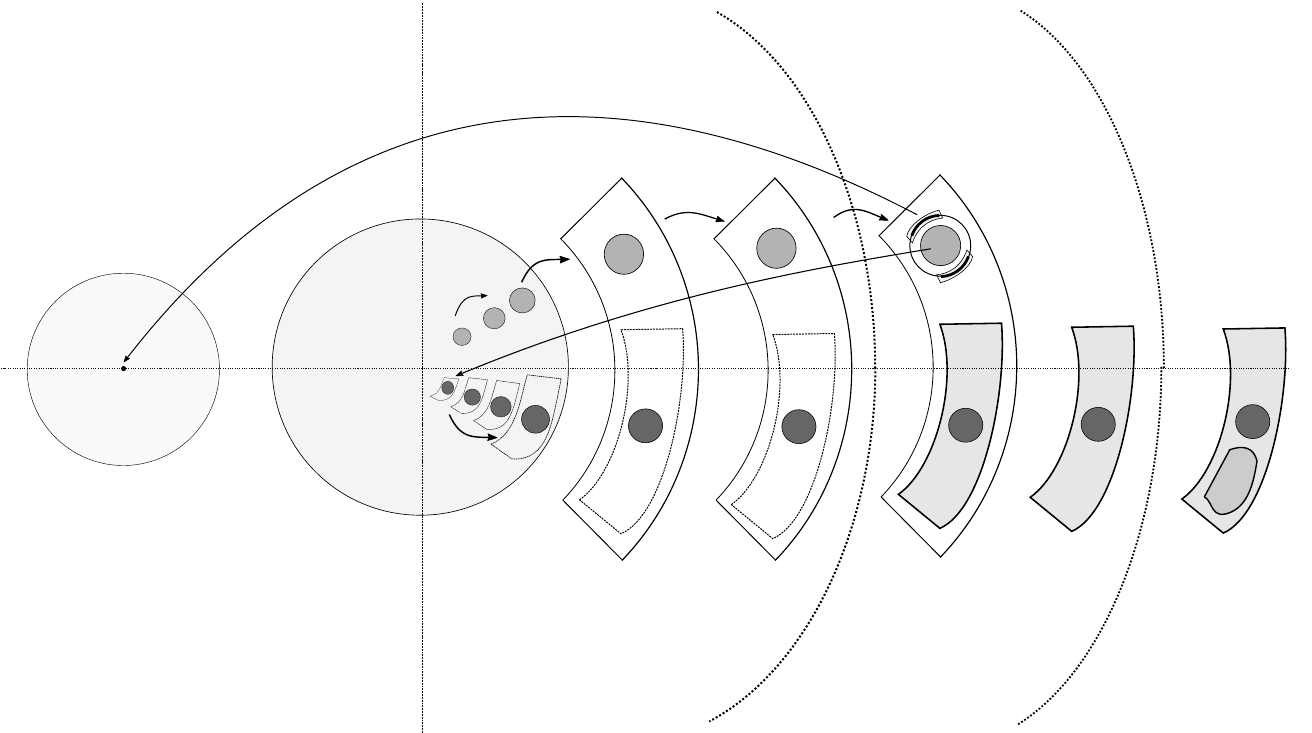
	\caption{Schematic of the sets and functions in the construction. }
	\label{fig:main}
\end{figure}

In order to prove Theorem \ref{maintheorem}, it suffices to show the following:
\begin{proposition}\label{prop_properties_f} There exists a transcendental entire function $f$ such that for all $j\geq 1$,
	\begin{enumerate}[(a)]
		\item \label{item_p1}$f(\overline{A})\cup f^{N_j+1}(P_{j-1}) \subset A$;
		\item \label{item_p3} $f^{N_{j+1}}(K_j)\subset \C\setminus \Delta_j;$
		\item \label{item_p4} $\Re f^j(z)>-1$ for all $z\in \interior{(K_j)}$;
		\item \label{item_p5} $f^\ell(K_j)\subset D$ if and only if $N_p<\ell \leq N_{p}+n_{p+1}$ for some $0\leq p\leq j;$ 
		\item  \label{item_p6} $f(0)=0$;
		\item \label{item_p7} For each domain $\widehat{D}\subset D$ containing the origin, there exists $C\defeq C(\widehat{D})\in \N$ such that $f^\ell(K_j)\subset \widehat{D}$ if and only if $N_p< \ell \leq N_{p}+\max\{n_{p+1}-C, 0\}$ for some $0\leq p\leq j.$ 
	\end{enumerate}
\end{proposition}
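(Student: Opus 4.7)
The plan is to construct $f$ as the locally uniform limit of a sequence of entire functions $(f_k)_{k \geq 1}$, each obtained by Theorem~\ref{thm:Runge} applied to a carefully chosen collection of pairwise disjoint full compact sets. At step $k$, the holomorphic target data $h_k$ would prescribe: $\Phi(z)=3z$ on $\overline{D}$, together with the interpolation conditions $h_k(0)=0$ and $h_k'(0)=3$, which secure $f(0)=0$ and anchor the attracting behaviour of $\Phi$ at the origin; the translation $z\mapsto z+1$ on each translate of $\overline{B_l}$ appearing in $U_{k-1}$, except on the terminal ``hand-off'' piece $\overline{\widehat{B}_{k-1}}$, where $h_k$ is instead prescribed to map into a distinguished preimage component $C_k\subset \overline{D}$ described below; and the constant $-1/4$ on $\overline{A}$ and on the forward iterates of each mesh $P_{j-1}$ (for $1 \leq j \leq k$) that have not yet entered $A$, so as to force $f^{N_j+1}(P_{j-1}) \subset A$.

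The auxiliary compact sets $B_k$ and $C_k$ would be built by pullback. Given $f_k$, Lemma~\ref{pullbacklemma} supplies a unique component of $f_k^{-n_{k+1}}(B_0)$ inside $D$, and the contraction estimate shows these components nest down to $0$; one takes $C_k$ to be this component, so that $n_{k+1}$ iterates of $f_k$ carry $C_k$ through $D$ and land it in $B_0$. The set $B_k \Subset B_0 + m_k - 1$ is then chosen as a small regular full neighbourhood of $f_k^{N_k}(K_k)$, so that $\widehat{B}_k = B_k + m_{k+1} - m_k$ sits compactly inside $B_0 + m_{k+1} - 1$ by \eqref{eq_K0} and \eqref{eq_B0}. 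By shrinking the Runge error $\varepsilon_k$---using Lemma~\ref{lem_iterates} to pass from small errors on $D$ to small errors on iterates, Lemma~\ref{lem_univalent} to preserve univalence on $\overline{D}$ and on $U_{k-1}$, and arranging $\sum_k \varepsilon_k < \infty$---the limit $f=\lim_k f_k$ is a well-defined entire function, transcendental because it is uniformly close to $z+1$ on an unbounded chain of translates of $B_0$.

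Granting this construction, each property would be verified directly. Property~\ref{item_p6}, namely $f(0)=0$, is immediate from the interpolation. Property~\ref{item_p1} holds because $\overline{A}$ is mapped to a neighbourhood of $-1/4 \in A$ and each $P_{j-1}$ lands in $A$ after the prescribed number of iterates by design. Property~\ref{item_p4} holds because every point in the planned orbit of $K_j$ up to time $j$ lies in $\overline{D}$ or in some translate of $\overline{B_0}$, all of which have real part exceeding $-r_1 > -1$. Property~\ref{item_p3} is a consequence of the itinerary: $f^{N_{j+1}}(K_j) \subset B_0 + m_{j+1} - 1 \Subset \C \setminus \Delta_j$ by \eqref{eq_B0}. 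Finally, \ref{item_p5} and~\ref{item_p7} both follow from tracking $K_j$ through the pullback components in $D$: since $f$ is close to $\Phi$ there and $\Phi$ expands by a factor of $3$ at $0$, the orbit of $K_j$ takes exactly $n_{p+1}$ iterates to leave $D$ after each re-entry, with only a bounded number $C(\widehat{D})$ of those iterates spent outside any fixed neighbourhood $\widehat{D}$ of $0$.

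The principal obstacle is the bootstrap dependency of the pullback pieces $C_k$ on the approximation quality of $f_k$: the set $C_k$ is defined using $f_k$, but the itinerary prescribed at step $k+1$ must actually be realised by $f_{k+1}$ (and ultimately by the limit $f$). Interleaving the errors $\varepsilon_k$ using Lemma~\ref{lem_iterates}, and exploiting the uniform $\tfrac{3}{2}$-expansion provided by Lemma~\ref{pullbacklemma}, is what lets the accumulated perturbations be absorbed so that all itineraries stabilise in the limit. A secondary delicate point is the simultaneous treatment of the finite forward orbits of each mesh $P_j$: these orbits must be prescribed at each step to land in $A$ without disturbing the dynamics already fixed on $\overline{D} \cup U_{k-1}$, which forces one to include a growing but finite set of additional Runge data at every stage.
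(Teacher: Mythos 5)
Your high-level strategy (Runge approximation, inductive construction, a ``hand-off'' contraction back into $D$, constants sending the mesh $P_{j-1}$ into $A$) is the one the paper uses. But there are concrete gaps in the way you set up the Runge data at each step, and they would break the construction.

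First, you propose prescribing the hand-off on all of $\overline{\widehat{B}_{k-1}}$ while simultaneously prescribing the constant $-1/4$ on the forward image of the mesh $P_{k-1}$. But $f_{k-1}^{N_k}(P_{k-1})$ is compactly contained in $\widehat{B}_{k-1}$ (since $P_{k-1}\subset K_{k-1}$ and $f_{k-1}^{N_k}(K_{k-1})\Subset \widehat{B}_{k-1}$), so the two compact sets you hand to Theorem~\ref{thm:Runge} are not disjoint, and the target data are inconsistent on the overlap. The paper avoids this by carving $\widehat{B}_{j-1}$ into two \emph{separate} pieces: a small compact $Q_j = f_{j-1}^{N_j}(L_j)$ (with $L_j\subset \operatorname{int}(K_{j-1})$ chosen disjoint from $P_{j-1}$) carrying the hand-off, and a disjoint compact $V_j$ around $f_{j-1}^{N_j}(P_{j-1})$ carrying the constant $-1/4$; the rest of $\widehat{B}_{j-1}$ is deliberately left \emph{outside} $T_j$.

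Second, your hand-off prescription is not explicitly injective, and your choice of $C_k$ as the full preimage component of $B_0$ under $f_k^{-n_{k+1}}$ is too large. The paper needs the $N_{j+1}$-th iterate of $\phi_j$ to be univalent on $K_j$; to get this it takes $h_j\colon Q_j\to C_j$ to be a \emph{non-constant affine contraction}, so that $\phi_j^{N_{j+1}}|_{K_j}$ is a composition of univalent maps, and it constructs $C_j$ specifically so that $B_j\defeq f_{j-1}^{n_{j+1}-1+m_j}(C_j)$ lands inside $\widehat{B}_{j-1}\setminus(Q_j\cup V_j)$. That disjointness is what makes $T_j=\Delta_j\cup V_j\cup Q_j\cup\bigcup_k(B_j+k)$ a legitimate pairwise-disjoint Runge collection and keeps $\phi_j$ well-defined. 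With your $C_k$ (the whole component) and $B_k$ (``a small neighbourhood of $f_k^{N_k}(K_k)$''), nothing guarantees $B_k$ avoids $Q_k$ and $V_k$, so you cannot a priori build a disjoint $T_k$ or a single-valued $\phi_k$.

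Finally, the verification of items \ref{item_p5} and \ref{item_p7} is more delicate than ``tracking the orbit'': the paper proves these via Claim~3(3) together with the Banach fixed-point estimate in Lemma~\ref{pullbacklemma} (which requires $f(0)=0$, i.e. item \ref{item_p6}), so the quantity $C(\widehat{D})$ comes from the nested radii $\rho_n$, not merely from the expansion of $\Phi$. Your sketch correctly identifies the mechanism but leaves the dependency on $f(0)=0$ implicit.
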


Before proving Proposition \ref{prop_properties_f}, we show how Theorem~\ref{maintheorem} follows from this result.
\begin{proof}[Proof of Theorem \ref{maintheorem}, using Proposition \ref{prop_properties_f}]
By \eqref{item_p1} together with Montel's theorem, all points in $A \cup \bigcup_{j\geq 0} P_j$ are contained in attracting basins of $f$, and so they have bounded orbits.  On the other hand, by \eqref{item_p3} all points in $K$ have unbounded orbits. Consequently, any neighbourhood of any $z\in \partial K$ contains both points with bounded and unbounded orbits, which prevents the family of iterates to be normal on it. We conclude that $\partial K \subset J(f)$. By \eqref{item_p4} and Montel's theorem, $\interior(K)=U \subset F(f)$, as $U$ is regular. Then, since $U$ is connected, it is a Fatou component. By \eqref{item_p3} and \eqref{item_p5}, $U$ cannot be periodic or preperiodic, and so it must be a wandering domain. Finally, equation \eqref{aneq} in the statement of the theorem follows from~\eqref{item_p5}, concluding our proof.
\end{proof}	

\begin{remark} For the proof of Theorem \ref{maintheorem}, we do not require our function $f$ to satisfy items \eqref{item_p6} and \eqref{item_p7} in Proposition \ref{prop_properties_f}. In particular, a version of Proposition \ref{prop_properties_f} without \eqref{item_p6} and \eqref{item_p7} can be proved using Runge's classical approximation theorem instead of Theorem \ref{thm:Runge}. However, fixing points becomes necessary in the proof of Theorem~\ref{fancytheorem}. It is also required in the proof of Corollary \ref{cor_stronger}, which is a stronger version of Corollary \ref{c1}. Thus, seeking consistency, we have opted for using the same approach in all proofs.
\end{remark}

\subsection{Proof of Proposition \ref{prop_properties_f}}

Roughly speaking, we will obtain $f$ as the limit of a sequence of functions $(f_j)_{j\geq 0}$. In order to define them, we use a sequence of auxiliary functions $(\phi_j)_{j\geq 0}$, so that $f_{j}$ approximates $\phi_j$ in a neighbourhood of a compact set $T_j \subset \Delta_{j+1}$, up to an error $\eps_j$. In turn, $\phi_j$ has been defined to be $f_{j-1}$ in $\Delta_{j}$; see Figure \ref{fig:main}. In addition, we will later define inductively a collection of compact sets $(B_j)_{j\geq 1}$, so that our construction has the following properties, for each $j \geq 0$:
 \begin{enumerate}[(i)]
	\item \label{item_i} $\Delta_{j}\subset T_j\subset \Delta_{j+1}$;
		 \item \label{item_iv}${f^{N_{{j+1}}}_{j}(K_{j})}\Subset \widehat{B}_j \subset B_0+m_{j+1}-1$;
	\item \label{item_iii} $f_j^{N_{j+1}}$ is univalent on $K_j$;
	\item\label{item_ii} $f_j$ is univalent on $U_j\subset T_j$;
	\item \label{item_extra} $f_j^{N_j+n_{j+1}+\max\{m_l,1\}+k}(K_j)\Subset B_l+k\subset U_j\cup \widehat{B}_j$ for all $0\leq l\leq j$ and $0\leq k \leq  m_{l+1}-\max\{m_l,1\}$;
	\item \label{item_v}$\eps_j<\eps/4^{j}$;
	\item \label{item_vi} $f_j(0)=0$ and $f'_j(0)=3$.
\end{enumerate}

To start the induction, let $D$, $A$ and $B_0$ be the sets fixed in the previous subsection and set 
 $$T_0\defeq \overline{D} \cup \overline{A} \cup \bigcup^{m_1-2}_{k=0}(\overline{B_0}+k).$$ 
 Note that $T_0$ is a disjoint union of compact sets such that $\C\setminus T_0$ is connected. Consider the auxiliary function $\phi_0\colon T_0\to \C$ given by
\begin{equation*}
	\phi_0(z)\defeq
	\begin{cases}
		\Phi(z),& z \in \overline{D},\\
		-\frac14,& z \in \overline{A},\\
		z+1, &\text{otherwise}.
	\end{cases}
\end{equation*}

Observe that $\phi_0$ is holomorphic on $T_0$, $\phi^{N_1}_0$ is univalent on $K_0\subset D$, by \eqref{eq_K0}, and $\phi_0$ is univalent on $U_0$.

\begin{Claim} There exists $\eps_0<\eps/4$ such that if $g$ is any function that approximates $\phi_0$ up to an error $2\eps_0$, then
\begin{itemize} 
\item $g^{N_1}$ is univalent on $K_0$ and $g$ is univalent on $U_0$;
\item $g^\ell(K_0)\subset D$ for all $0\leq \ell\leq n_1$, while $g^{n_1+1+k}(K_0)\subset B_0+k$ for all $0\leq k \leq m_1-1$. 
\end{itemize}		
\end{Claim}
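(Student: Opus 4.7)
The plan is to define a chain of nested open neighbourhoods $V_0, V_1, \dots, V_{N_1}$ of the successive $\phi_0$-iterates of $K_0$, and then to propagate the approximation $|g - \phi_0| < 2\eps_0$ along this chain via Lemma~\ref{lem_univalent}. By \eqref{eq_K0} together with the piecewise definition of $\phi_0$, the iterates satisfy $\phi_0^{\ell}(K_0) = \Phi^{\ell}(K_0) \Subset D$ for $0 \le \ell \le n_1$, and $\phi_0^{n_1+1+k}(K_0) = \Phi^{n_1+1}(K_0) + k \Subset B_0 + k$ for $0 \le k \le m_1 - 1$. I would choose each $V_\ell$ to be a small open neighbourhood of $\phi_0^\ell(K_0)$ compactly contained in the appropriate component (namely $D$ for $0 \le \ell \le n_1$, and $B_0 + k$ for $\ell = n_1 + 1 + k$), small enough that $\overline{\phi_0(V_\ell)} \subset V_{\ell+1}$ holds for every $0 \le \ell < N_1$.

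Next I would apply Lemma~\ref{lem_univalent} to $\phi_0|_{V_\ell}$, taking as ambient open set $U$ the interior of the component of $T_0$ that contains $V_\ell$. On each such $V_\ell$, the restriction of $\phi_0$ is either $\Phi$ or a unit translation, hence a conformal isomorphism with $\inf|\phi_0'| \ge 1$. The lemma yields positive thresholds $\eta_\ell$ such that any entire $g$ with $|g - \phi_0| < \eta_\ell$ on $T_0$ is univalent on $\overline{V_\ell}$; by further shrinking $\eta_\ell$ if needed (using $\overline{\phi_0(V_\ell)} \subset V_{\ell+1}$ and uniform continuity), I also obtain $g(V_\ell) \subset V_{\ell+1}$. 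Separately, for univalence of $g$ on $U_0$ as a whole, I would observe that the images $\phi_0(\overline{D}) \subset \overline{D(0, 1/3)}$ and $\phi_0(\overline{B_0} + k) = \overline{B_0} + k + 1 \subset \overline{D(k+1, 1/3)}$ for $0 \le k \le m_1 - 2$ are pairwise disjoint, so a sufficiently small error on $U_0$ preserves this disjointness and forces global injectivity of $g$ on $U_0$.

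Setting $\eps_0 \defeq \tfrac{1}{2}\min(\eps/2, \eta_0, \dots, \eta_{N_1 - 1}, \delta)$, where $\delta$ is the threshold furnished by the disjointness argument, both conclusions then follow. The trapping $g^\ell(K_0) \subset V_\ell$ holds by induction on $\ell$, which yields the required containments in $D$ and in translates of $B_0$; and $g^{N_1}|_{K_0}$ is a composition of univalent maps $g\colon V_\ell \to V_{\ell+1}$, hence univalent. The main obstacle I anticipate is that $\phi_0$ is defined piecewise and is not holomorphic on any open connected set containing all the iterates, which prevents a direct one-shot application of Lemma~\ref{lem_iterates}; instead, the analysis must be localised to each component of $T_0$ and the single-step estimates from Lemma~\ref{lem_univalent} propagated by hand along the orbit.
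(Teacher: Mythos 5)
Your argument is sound and reaches the right conclusions, but the stated reason for avoiding Lemma~\ref{lem_iterates} is a misconception, and that is precisely where you diverge from the paper's (much shorter) proof. Lemma~\ref{lem_iterates} asks only that $U\subset\C$ be \emph{open}, not connected. Since $T_0$ is a disjoint union of full compact sets on each of which $\phi_0$ is holomorphic, $\phi_0$ extends holomorphically to a (disconnected) open neighbourhood $\widetilde T_0$ of $T_0$; the orbit $\phi_0^\ell(K_0)$, $0\le\ell\le N_1$, stays compactly inside the pieces of $T_0$ by \eqref{eq_K0} and the definition of $\phi_0$, so $\phi_0^{N_1}$ is defined and univalent on a neighbourhood $G\subset\widetilde T_0$ of $K_0$, and the lemma applies directly to $\widetilde T_0$, $G$, $K_0$. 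That one application immediately yields both the univalence of $g^{N_1}$ on $K_0$ and the approximation $|g^k-\phi_0^k|<\eps'$ on $K_0$ for $k\le N_1$, which combined with \eqref{eq_K0} gives the second bullet. The paper then handles univalence of $g$ on $U_0$ by one application of Lemma~\ref{lem_univalent} to a neighbourhood of $U_0$, just as you do. So the short argument you rule out is in fact the paper's argument.

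Your step-by-step chain $V_0\supset\cdots\supset V_{N_1}$ with $\overline{\phi_0(V_\ell)}\subset V_{\ell+1}$, built via Lemma~\ref{lem_univalent} at each link, is essentially an unpacking of the proof of Lemma~\ref{lem_iterates}; it is correct (the $V_\ell$ should be chosen backwards from $V_{N_1}$, or shrunk simultaneously, as you tacitly do), and it does give a self-contained proof. The only additional point worth making explicit is that global injectivity of $g$ on $U_0$ uses both: (i) univalence of $g$ on each component of $U_0$, from Lemma~\ref{lem_univalent}; and (ii) the fact that the $\phi_0$-images of distinct components of $U_0$ are separated by a positive distance, which a sufficiently small uniform error on $T_0$ cannot close; you gesture at (ii) but it deserves a sentence. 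So: correct, but you have rederived a lemma you were free to cite, and your justification for doing so does not hold.
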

\begin{subproof}
Note that $\phi_0$ extends holomorphically to an open neighbourhood of $T_0$, so that $\phi_0$ is univalent on it. By applying Lemmas~\ref{lem_univalent} and \ref{lem_iterates} to this neighbourhood and $T_0$, the first part of the claim follows. By our assumption on $K_0$ in \eqref{eq_K0} and the definition of $\phi_0$, for all $\eps_0$ small enough the second part also holds.
\end{subproof}

We apply Theorem~\ref{thm:Runge} to obtain an entire map $f_0$ satisfying \eqref{item_vi} that approximates $\phi_0$ on $T_0$ up to an error $\eps_0$. In particular, by Claim 1, \eqref{item_iv}-\eqref{item_v} hold. Since $\Delta_{0}=\{0\}\subset D$ and by \eqref{eq_B0}, so does \eqref{item_i}. This concludes the first step on the induction argument.

Let $j\geq 1$, and suppose that $f_{j-1}$, $B_{j-1}$ and $\eps_{j-1}$ have been defined. Let us apply Lemma \ref{lem_compacts} to the set $K_j$ to find a compact set $L_j\subset K_{j-1}$ such that $K_j\subset \text{int}(L_j)$. We denote 
$$Q_j \defeq f^{N_j}_{j-1}(L_j). $$
Observe that $L_j$, and hence $Q_j$ are both compact and have an interior.
Since by the inductive hypotheses $f^{N_j}_{j-1}$ is injective on $K_{j-1}$ and $f^{N_j}_{j-1}(K_{j-1})\subset \widehat{B}_{j-1}$, we have that $f_{j-1}^{N_j}(P_{j-1})\Subset \widehat{B}_{j-1}$ and  $f_{j-1}^{N_j}(P_{j-1})\cap Q_j=\emptyset$. We can then choose a neighbourhood of $f_{j-1}^{N_j}(P_{j-1})$ compactly contained in $\widehat{B}_{j-1}$ and disjoint from $Q_j$. We denote the closure of this neighbourhood by $V_{j}$.

The following claim provides us with a set $C_j\subset D$ where $Q_j$, and so $f_{j-1}^{N_j}(K_j)$, will be mapped to under our next model map.
\begin{Claim} There exists a compact set $C_j\subset D$ with non-empty interior 
	such that 
the following hold:
\begin{itemize}
\item $f_{j-1}^{n_{j+1}-1+m_j}(C_j)\Subset \widehat{B}_{j-1}\setminus (Q_j\cup V_{j})$. 
\item $f_{j-1}^{n_{j+1}-1+m_j}$ is univalent on $C_j$;
\item $f_{j-1}^{\ell}(C_j)\subset D$ for $0\leq \ell < n_{j+1},$ while  $f_{j-1}^{n_{j+1}-1+\max\{m_l,1\} +k}(C_j)\Subset B_l+k$ for all $0\leq l\leq j$ and $0\leq k\leq m_{l+1}-\max\{m_l,1\}$.
\end{itemize}
\end{Claim}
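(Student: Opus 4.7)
The plan is to construct $C_j$ as a nested pullback, under suitable univalent inverse branches of $f_{j-1}$ applied $n_{j+1}-1+m_j$ times, of a small closed topological disk $W$ chosen inside $\widehat{B}_{j-1}\setminus(Q_j\cup V_j)$. That this target set is open and nonempty follows from the inductive hypothesis \eqref{item_iv}, which gives $Q_j=f^{N_j}_{j-1}(L_j)\subset f^{N_j}_{j-1}(K_{j-1})\Subset\widehat{B}_{j-1}$, together with the choice of $V_j$ as a compact neighbourhood of $f^{N_j}_{j-1}(P_{j-1})$ in $\widehat{B}_{j-1}\setminus Q_j$ made just before the claim.

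I would carry out the pullback in two stages. In Stage~1, I pull $W$ back $m_j-1$ times through the translation chain, following the orbit of $K_{j-1}$ in reverse: $\widehat{B}_{j-1}=B_{j-1}+(m_j-\max\{m_{j-1},1\})\leftarrow B_{j-1}+(m_j-\max\{m_{j-1},1\}-1)\leftarrow\cdots\leftarrow B_{j-1}\subset\widehat{B}_{j-2}\leftarrow\cdots\leftarrow B_0$. At each step, univalence of $f_{j-1}$ on $U_{j-1}$ (hypothesis~\eqref{item_ii}) provides a univalent inverse branch, and since $f_{j-1}$ is uniformly close to the translation $z\mapsto z+1$ on each $\overline{B_l}+k\subset U_{j-1}$ --- this being inherited from the definition of $\phi_0$ via $\phi_i=f_{i-1}$ on $\Delta_i$, with cumulative errors controlled by hypothesis~\eqref{item_v} --- the chosen branch lands precisely in the prescribed translate $B_l+k$. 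This yields a compact set $W'\Subset B_0$. In Stage~2, I perform $n_{j+1}$ further pullbacks into $D$ using the repelling fixed point at the origin. Since $f_{j-1}(0)=0$ and $|f'_{j-1}(0)|=3$ by \eqref{item_vi}, the final part of Lemma~\ref{pullbacklemma} supplies a unique component of $f^{-n}_{j-1}(B_0)\cap D$ inside a disk $D(0,\rho_n)$ with $\rho_n\to 0$; taking the corresponding contracting inverse branches yields $C_j\Subset D(0,\rho_{n_{j+1}})\subset D$, compact and with nonempty interior.

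Each claimed property then follows from the construction: $f_{j-1}^\ell(C_j)\subset D$ for $0\le\ell<n_{j+1}$ comes from Stage~2; the containments $f_{j-1}^{n_{j+1}-1+\max\{m_l,1\}+k}(C_j)\Subset B_l+k$ come from Stage~1; and $f_{j-1}^{n_{j+1}-1+m_j}(C_j)=W\Subset\widehat{B}_{j-1}\setminus(Q_j\cup V_j)$ by design. Univalence of $f^{n_{j+1}-1+m_j}_{j-1}$ on $C_j$ follows since it is a composition of univalent inverse branches (on $D$ by Lemma~\ref{pullbacklemma}, on each $\overline{B_l}+k\subset U_{j-1}$ by \eqref{item_ii}). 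The main obstacle will be the precise execution of Stage~1: ensuring that the cumulative perturbation of $f_{j-1}$ from the translation $z\mapsto z+1$ is small enough --- relative to the geometry of the $B_l+k$ and the distances between their various preimage components --- so that each chosen inverse branch maps into the intended translate rather than some spurious preimage component. I expect this to be handled by taking $W$ sufficiently small and invoking Lemma~\ref{lem_iterates} to propagate the approximation through the composition of $m_j-1$ inverse branches, using the bound $\eps_i<\eps/4^i$ from hypothesis~\eqref{item_v}.
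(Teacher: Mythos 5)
Your overall strategy coincides with the paper's: construct $C_j$ by pulling a small set back from $\widehat{B}_{j-1}\setminus(Q_j\cup V_j)$ through the translation chain $m_j-1$ times into $B_0$, and then $n_{j+1}$ further times into $D$ via the inverse branch at the origin, reading off the three bullet points from the construction. Where you diverge from the paper is in the execution of Stage~1, and your proposed fix for what you call ``the main obstacle'' is not the right tool. The paper does not try to control the cumulative deviation of $f_{j-1}$ from $z\mapsto z+1$, nor does it invoke Lemma~\ref{lem_iterates} (which concerns \emph{forward} iterates being uniformly close, not the existence of iterated \emph{inverse} branches). Instead it anchors the pullback on the known forward orbit of $K_{j-1}$: by inductive hypothesis~\eqref{item_extra} one has $f_{j-1}^{N_j}(K_{j-1})\Subset\widehat{B}_{j-1}$ and, step by step, $f_{j-1}^{N_{j-1}+n_j+\cdots}(K_{j-1})\Subset B_l+k$, so the composed inverse branch $F=F_{0,0}\circ\cdots\circ F_{j-1,m_j-m_{j-1}-1}$ is automatically defined on $E=f_{j-1}(B_{j-1}+m_j-m_{j-1}-1)\cap\bigl(\widehat{B}_{j-1}\setminus(Q_j\cup V_j)\bigr)$, and $F$ sends a neighbourhood of $f_{j-1}^{N_j}(K_{j-1})$ in $E$ into $B_0$ simply by continuity (since $F\bigl(f_{j-1}^{N_j}(K_{j-1})\bigr)=f_{j-1}^{N_{j-1}+n_j+1}(K_{j-1})\Subset B_0$). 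This makes the delicate placement of your $W$ automatic and avoids any error-propagation estimate; it also ensures, as your argument does not, that $W$ actually lies in the image $f_{j-1}(B_{j-1}+m_j-m_{j-1}-1)$ so that the first inverse branch exists at all.

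Two smaller remarks. First, the paper additionally removes $f_{j-1}^{N_j}(K_{j-1})$ before choosing the closed set $X$, which gives the slightly stronger containment \eqref{eq_FX}; your version only guarantees disjointness from $Q_j\cup V_j$, which does satisfy the stated claim but is not what the rest of the construction uses. Second, for Stage~2 you invoke the last sentence of Lemma~\ref{pullbacklemma}, which requires $f_{j-1}(0)=0$ and the Banach contraction argument; the paper only needs the earlier part of that lemma (the unique component of $f^{-n}(B_0)$ in $D$ and the resulting univalent branch $G$), which does not rely on the fixed point at $0$. Your route works since \eqref{item_vi} does supply $f_{j-1}(0)=0$, but it is more than is needed here.
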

\begin{subproof}
The idea of the proof is as follows. We want to pull back a subset of $\widehat{B}_{j-1}\setminus (Q_j\cup V_{j})$, using the right inverse branches of $f_{j-1}$, $m_j+(n_{j+1}-1)$ times. We shall do so in such a way that the iterated preimages remain in $U_{j-1}$, where we can guarantee injectivity of  $f_{j-1}$.

To start with, by injectivity of $f_{j-1}$ in $U_{j-1}$, for each $0\leq l<j$ and $0\leq k< m_{l+1}-\max\{m_l,1\}$, $f_{j-1}\vert_{B_{l}+k}$ is a conformal isomorphism to its image. Thus, we can consider the restrictions of the inverse branches 
$$F_{l,k}\defeq  \left(f_{j-1}\vert_{B_{l}+k}\right)^{-1}\colon f_{j-1}(B_{l}+k)\cap (B_{l}+k+1) \to B_{l}+k,$$
whose domains and codomains lie in $U_{j-1}\cup \widehat{B}_{j-1}$ and are non-empty by \eqref{item_extra}. Set 
$$E\defeq f_{j-1}(B_{j-1}+m_j-m_{j-1}-1) \cap \widehat{B}_{j-1}\setminus (Q_j\cup V_{j}),$$
which is non-empty and has non-empty interior by \eqref{item_extra} and the definition of $V_j$ and $Q_j$. Define the map
$$F\defeq(F_{0,0}\circ \cdots \circ F_{j-1,m_{j}-{m_{j-1}}-1})\colon E \to B_0,$$ which is a conformal isomorphism to its image. By  \eqref{item_extra}, $F(f^{N_j}_{j-1}(K_{j-1}))$ is well defined and equals $f^{N_{j-1}+n_{j}+1}_{j-1}(K_{j-1})$. Hence, $F(E)$ is non-empty, and so there exists a neighbourhood of $f^{N_j}_{j-1}(K_{j-1})$ in $E$ whose image under $F$ is in $B_0.$ Let us choose some closed set $X$ with non-empty interior in such neighbourhood minus $f^{N_j}_{j-1}(K_{j-1})$. In particular,
\begin{equation}\label{eq_FX}
 F(X)\subset B_0\setminus f^{N_{j-1}+n_{j}+1}_{j-1}(K_{j-1}).
\end{equation} 

Next, using Lemma \ref{pullbacklemma}, there exists an $n_{j+1}$-inverse branch of $f_{j-1}$, 
$$G\defeq \left(f_{j-1}\vert\right)^{-n_{j+1}}\colon B_0\to D,$$ which is a conformal isomorphism to its image. Then, by injectivity of $f_{j-1}$ in $D$, Lemma \ref{pullbacklemma} and \eqref{eq_FX}, the claim follows choosing $C_j\defeq G(F(X))$.
\end{subproof}	

Denote
\begin{equation*}
	B_j\defeq f_{j-1}^{n_{j+1}-1+m_j}(C_j),
\end{equation*}
and let $h_j\colon Q_j\to C_j$ be a non-constant affine contraction such that 
\begin{equation*}
h_j(Q_j) \Subset C_j.
\end{equation*}
Finally, set 
$$T_{j}\defeq \Delta_{j}\cup V_{j}\cup Q_{j} \cup \bigcup^{m_{j+1}-m_{j}-1}_{k=0} (B_{j}+k),$$ 
which is a disjoint union of compact sets such that $\C\setminus T_{j}$ is connected. Since $B_j\subset \widehat{B}_{j-1}$ and using \eqref{eq_B0}, 
\begin{equation}\label{eq_inclusion_Bj}
B_j+m_{j+1}-m_j-1\subset B_0+m_{j+1}-2 \subset \Delta_j,
\end{equation}
and \eqref{item_i} follows. We define $\phi_{j}\colon T_{j}\to \C$ as
\begin{equation*}
	\phi_{j}(z)\defeq
		\begin{cases}
			f_{j-1}(z),& z \in \Delta_{j},\\
			-\frac14,&  z \in V_{j},\\
			h_j(z),& z \in Q_{j},\\
			z+1,& \text{otherwise},
		\end{cases}
	\end{equation*}
    noting that $\phi_{j}$ is holomorphic on $T_{j}$. 
    Note that 
    \begin{equation}\label{eq_phi_in_Kj}
    \phi_j^{N_{j+1}}\vert_{K_j}=(\tau^{m_{j+1}-m_j}\circ f^{n_{j+1}-1+m_j}_{j-1}\circ h_j\circ f^{N_j}_{j-1})\vert_{K_j},
    \end{equation}
    where $\tau$ is the translation map $z\mapsto z+1$. Hence, by \eqref{item_iii}, the definition of $h_j$ and Claim 2, $\phi_j^{N_{j+1}}$ is univalent on $K_j$ as a composition of univalent maps. In addition, by \eqref{item_ii} and definition, $\phi_j$ is univalent on $U_{j-1}\cup \bigcup^{m_j+1-m_j-1}_{k=0} B_j=U_j$, and
    \begin{equation}\label{eq_phijPj}
    \phi_j^{N_j+1}(P_{j-1})=\phi_j\circ f_{j-1}^{N_j}(P_{j-1})\subset \phi_j(V_j)=\{-1/4\}\subset A.
    \end{equation}
    
    \begin{Claim}\label{c3}There exists $\eps_j<\eps_{j-1}/4$ such that any function $g$ that approximates $\phi_j$ up to an error $2\eps_j$ satisfies the following: 
    	\begin{enumerate}
    		\item \label{P_j_in_A} $g^{N_j+1}(P_{j-1}) \subset A$; 
    		\item \label{item_univalent}$g^{N_{j+1}}$ is univalent on $K_j$ and $g$ is univalent on $U_j$. 
    		\item \label{in_D} $g^{N_{j}+\ell}(K_j)\subset D$ for $1 \leq \ell \leq n_{j+1},$ while  $g^{N_j+n_{j+1}+\max\{m_l,1\}+k}(K_j)\Subset B_l+k$ for all $0\leq l\leq j$ and $0\leq k \leq m_{l+1}-\max\{m_l,1\}$.
    		\item \label{item_4}$g^{N_{j+1}}(K_j)\Subset \widehat{B}_j$.
    	\end{enumerate}
    \end{Claim}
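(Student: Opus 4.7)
The plan is to mirror the proof of Claim 1, now with the more elaborate $\phi_j$. First, I would extend $\phi_j$ holomorphically to an open neighborhood $\Omega$ of $T_j$: each of its four pieces is the restriction of an entire map ($f_{j-1}$, the constant $-\tfrac14$, the affine map $h_j$, and the translation $z \mapsto z+1$), and the connected components of $T_j$ are pairwise disjoint, so the extension can be built piece-by-piece on pairwise disjoint open neighborhoods. On $\Omega$, equation \eqref{eq_phi_in_Kj} remains valid on an open neighborhood $G$ of $K_j$, where I additionally invoke the inductive hypothesis \eqref{item_iii} to ensure $f_{j-1}^{N_j}$ is univalent with $f_{j-1}^{N_j}(K_j) \Subset \operatorname{int}(Q_j)$ (using $K_j \Subset \operatorname{int}(L_j)$), so the factorization passes through the interior of $Q_j$. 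Combined with univalence of $h_j$ and of $f_{j-1}^{n_{j+1}-1+m_j}$ on $C_j$ from Claim 2, one sees $\phi_j^{N_{j+1}}$ is defined and univalent on $G$; likewise, $\phi_j$ is univalent on an open neighborhood of $U_j$, as noted just before the claim.

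Next, I would apply Lemma \ref{lem_univalent} to the restriction of $\phi_j$ to an open neighborhood of $U_j$, yielding a threshold $\eps^{(1)} > 0$ such that every $g$ holomorphic on $\Omega$ with $\sup_\Omega |g - \phi_j| < \eps^{(1)}$ is univalent on $U_j$. This delivers the second half of item \eqref{item_univalent}. I would then apply Lemma \ref{lem_iterates} to $\phi_j^{N_{j+1}}$ on the pair $K_j \Subset G$: for any $\eta > 0$ there is $\delta = \delta(\eta) > 0$ such that every such $g$ with $\sup_\Omega |g - \phi_j| < \delta$ makes $g^{N_{j+1}}$ defined and univalent on $K_j$ and satisfies $|g^k(z) - \phi_j^k(z)| < \eta$ on $K_j$ for every $1 \leq k \leq N_{j+1}$. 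This handles the first half of item \eqref{item_univalent}.

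Finally, I would identify the relevant $\phi_j$-orbits and exploit compact containment. Using \eqref{eq_phi_in_Kj}, the identity $\phi_j|_{\Delta_j} = f_{j-1}$, and Claim 2, together with the observation $\phi_j^{N_j}(K_j) \Subset \operatorname{int}(Q_j)$, one obtains the compact inclusions $\phi_j^{N_j+\ell}(K_j) \Subset D$ for $1 \leq \ell \leq n_{j+1}$ and $\phi_j^{N_j+n_{j+1}+\max\{m_l,1\}+k}(K_j) \Subset B_l+k$ for $0 \leq l \leq j$ and $0 \leq k \leq m_{l+1}-\max\{m_l,1\}$, with the boundary case $\phi_j^{N_{j+1}}(K_j) \Subset \widehat{B}_j$. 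Since these strict inclusions are finite in number, choosing $\eta$ small enough forces the corresponding $g$-iterates to satisfy the same (non-strict) inclusions, giving items \eqref{in_D} and \eqref{item_4}. Item \eqref{P_j_in_A} follows the same way from \eqref{eq_phijPj}, since $\phi_j^{N_j+1}(P_{j-1}) = \{-\tfrac14\}$ is a single interior point of $A$. Setting $\eps_j$ below half the minimum of $\eps^{(1)}$, $\delta(\eta)$ for the chosen $\eta$, and $\eps_{j-1}/4$ yields the claim.

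The only real obstacle is the bookkeeping: threading the compact containments $\Subset$ through all $N_{j+1}$ iterates and checking that the neighborhoods chosen for the extension of $\phi_j$ can be made mutually consistent within a single $\Omega$. No genuinely new analytic ingredient is needed beyond Lemmas \ref{lem_univalent} and \ref{lem_iterates} and the structure already exposed by Claims 1 and 2.
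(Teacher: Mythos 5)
Your proposal is correct and follows essentially the same route as the paper: extend $\phi_j$ holomorphically to a neighbourhood of $T_j$, verify that items (1)--(4) hold for $\phi_j$ itself via the factorisation \eqref{eq_phi_in_Kj}, \eqref{eq_phijPj}, Claim~2, and the compact containment $\phi_j^{N_j}(K_j)\Subset\operatorname{int}(Q_j)$ (equivalently $\phi_j^{N_j+1}(K_j)\Subset C_j$), and then invoke Lemmas~\ref{lem_univalent} and~\ref{lem_iterates} together with the finiteness and strictness of the inclusions to choose $\eps_j$ small enough that all four properties persist under a $2\eps_j$-perturbation. The paper states this more tersely, but the content and the sequence of ideas are the same.
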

   
     \begin{subproof}
     	By \eqref{eq_phi_in_Kj} and \eqref{eq_phijPj}, items \eqref {P_j_in_A} and \eqref{item_univalent} hold for $g=\phi_j$, and so they are possible by Lemmas \ref{lem_univalent} and \ref{lem_iterates}. Note that by definition, $$\phi^{N_j+1}_j(K_j)\subset \phi_j\circ f^{N_j}_{j-1}(L_j)=\phi_j^{N_j+1}(L_j) \Subset C_j.$$
     	By this, Claim 2, and the definition of $\phi_j$, \eqref{in_D} and \eqref{item_4} hold for $g=\phi_j$. Hence, reducing $\eps_j$ if necessary, items \eqref{in_D} and \eqref{item_4} follow. 
    \end{subproof}
    
We apply Theorem~\ref{thm:Runge} to obtain an entire function $f_j$ satisfying \eqref{item_vi} that approximates $\phi_j$ in $T_j$, up to an error of at most $\eps_j$. By Claim 3, the first part of \eqref{item_iv} and \eqref{item_iii}-\eqref{item_v} hold. We are left to check that $\widehat{B}_j \subset B_0+m_{j+1}-1$, which follows from \eqref{eq_B0} and \eqref{eq_inclusion_Bj}. This concludes the inductive construction.

By our choice of the sequence $(\eps_j)$ satisfying \eqref{item_v}, we have that $(f_k)^\infty_{k=j}$ is a Cauchy sequence when restricted to the set $T_j$. Since, by \eqref{item_i}, $\Delta_j\subset T_j$ and $\bigcup^{\infty}_{j=1}\Delta_j=\C$, given the assumption of $(m_j)$ being strictly increasing, we have that the functions $f_j$ converge locally uniformly to an entire function $f$.

We are left to check that $f$ satisfies the properties in the statement of the proposition. Note that for any $j\geq 0$ and $z\in T_j$, 
\begin{equation*}
	\vert f(z)-\phi_j(z)\vert \leq \sum^\infty_{k=j} \vert f_{k+1}(z)-f_{k}(z)\vert+ \vert f_j(z)-\phi_j(z)\vert \leq \sum^\infty_{k=j} \eps_k\leq 2\eps_j,
\end{equation*}
and so the conclusions in Claims 1 and \ref{c3} hold for $f$. Since $\phi_0(A)=f_0(A)=-\frac{1}{4}$, we have that $f(A)\subset D(-\frac{1}{4},\frac{1}{18})\subset A$. By this and item \eqref{P_j_in_A} in Claim 3, \eqref{item_p1} follows. Moreover, \eqref{item_p3} holds by \eqref{item_4} and \eqref{eq_B0}, and \eqref{item_p4} and \eqref{item_p5} follow from \eqref{in_D}. Finally, \eqref{item_p6} is a consequence of \eqref{item_vi}, and \eqref{item_p7} follows from \eqref{in_D} together with Lemma \ref{pullbacklemma}. This concludes the proof of Proposition \ref{prop_properties_f}.

\subsection{Proof of Corollary \ref{c1}}
The fact that $f$ is constructed to fix zero allows us to prove the following stronger version of Corollary \ref{c1}.
\begin{cor}\label{cor_stronger} Suppose that $\lambda \in [0, 1]$, let $U$ be a regular domain whose closure in $\C$ is a full compact set, and choose a point $p\in \C\setminus \overline{U}$. Then there is a transcendental entire function $f$ for which $U$ is a wandering domain and such that for any sufficiently small neighbourhood $\widehat{D}$ of $p$, we have that
	\begin{equation}\label{anothereq2}
		\lim_{k\rightarrow\infty} \frac{\#\{ n \leq k : f^n(U) \subset \widehat{D} \}}{k} = \lambda.
	\end{equation}
\end{cor}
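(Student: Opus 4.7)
The plan is to reduce to the case $p=0$ by conjugating with a translation, apply Proposition \ref{prop_properties_f} (whose properties \eqref{item_p6} and \eqref{item_p7} were built in precisely for this purpose), and then read off the density directly from the chosen sequences.

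First, I would set $T \colon z \mapsto z - p$ and $\tilde U \defeq T(U)$. Since $p \in \C \setminus \overline U$, the set $\tilde U$ is again a regular domain whose closure is full and compact and does not contain the origin. I would then invoke Proposition \ref{prop_properties_f} with $\tilde U$ in place of $U$ and with the sequences $(n_j), (m_j)$ from the proof of Corollary \ref{c1}: $m_j = j$ together with $n_j = j^2$ if $\lambda = 1$, $n_j = \lceil \lambda j/(1 - \lambda) \rceil$ if $\lambda \in (0,1)$, and $n_j = 1$ if $\lambda = 0$. This produces a transcendental entire function $\tilde f$ that fixes $0$ (by \eqref{item_p6}) and for which $\tilde U$ is a wandering domain. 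I would then define $f \defeq T^{-1} \circ \tilde f \circ T$; this is a transcendental entire function that fixes $p$ and has $U = T^{-1}(\tilde U)$ as a wandering domain. Moreover, $f^n(U) \subset \widehat D$ if and only if $\tilde f^n(\tilde U) \subset \widehat D - p$.

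Next, I would declare a neighbourhood $\widehat D$ of $p$ to be \emph{sufficiently small} if $\widehat D \subset D(p, r_1)$, so that $\widehat D - p$ is a neighbourhood of $0$ compactly contained in the disc $D$ of Lemma~\ref{pullbacklemma}. Applying \eqref{item_p7} to $\widehat D - p$ yields a constant $C = C(\widehat D) \in \N$ such that $\tilde f^\ell(\tilde U) \subset \widehat D - p$ precisely when $N_p < \ell \leq N_p + \max\{n_{p+1} - C, 0\}$ for some $p \geq 0$. Writing $\Sigma(k) \defeq \#\{\ell \leq k : f^\ell(U) \subset \widehat D\}$, this gives
\begin{equation*}
\Sigma(k) = \sum_{i=1}^{P(k)} \max\{n_i - C, 0\} + O\bigl(n_{P(k)+1}\bigr),
\end{equation*}
where $P(k)$ is the unique index with $N_{P(k)} \leq k < N_{P(k)+1}$ and the error comes from the partial block containing $k$.

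Finally, a direct C\'esaro-type estimate finishes the proof. For $\lambda \in (0,1)$ the sequences satisfy $N_P \sim P^2/(2(1-\lambda))$ and $\sum_{i=1}^P n_i \sim \lambda N_P$; for $\lambda = 1$ one has $N_P \sim \sum n_i \sim P^3/3$; for $\lambda = 0$ the numerator vanishes identically once $C \geq 1$. In every case the fixed correction $-C$ removes at most $C\cdot P(k)$ from the numerator, which is $o(k)$ since $P(k)$ grows polynomially slower than $N_{P(k)} \asymp k$. Dividing by $k$ therefore gives $\Sigma(k)/k \to \lambda$, as required. There is no genuine obstacle here: all of the hard work is already absorbed into Proposition \ref{prop_properties_f}, particularly property \eqref{item_p7} (which in turn rests on the pull-back mechanism at the repelling fixed point $0$ described in Lemma~\ref{pullbacklemma}). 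The only care needed is to note that $C = C(\widehat D)$ depends on $\widehat D$ but not on $k$, so the asymptotic estimate holds uniformly for every sufficiently small $\widehat D$ with the same function $f$.
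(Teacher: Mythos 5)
Your proposal is correct and follows essentially the same route as the paper: reduce to $p=0$ by an affine change, apply Proposition~\ref{prop_properties_f} with the same sequences, invoke property~\eqref{item_p7} to absorb the constant $C=C(\widehat{D})$, and finish with the same C\'esaro-type density computation. The only cosmetic differences are your direct asymptotic count of $\Sigma(k)$ (and choosing $n_j=1$ when $\lambda=0$) in place of the paper's explicit squeeze using the modified sequences $\widehat{n}_j=\max\{0,n_j-C\}$, $\widehat{m}_j=m_j+\min\{n_j,C\}$, which amounts to the same estimate.
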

\begin{proof}
By applying an affine transformation, we may assume without loss of generality that $p=0$. Let us choose the sequences $m_j=j$, for $j\in \N$, and 
\begin{equation*}
	n_j\defeq
	\begin{cases}
		j^2,& \lambda=1,\\
		\left\lceil\frac{\lambda}{1-\lambda}\cdot j \right\rceil, &  \text{otherwise,}\\
	\end{cases}
\end{equation*}
and let us apply Theorem \ref{maintheorem} for the given domain $U$ and this choice of sequences. Let $\widehat{D}\subset D$ be any other domain containing $0$. Then, by Proposition \ref{prop_properties_f}\eqref{item_p7}, there exists $C \in \N$, depending on $\widehat{D}$, such that equation \eqref{aneq} in Theorem \ref{maintheorem} holds for the sequences 
\begin{equation*}
\widehat{n}_j\defeq \max \{0,n_j -C\},  \quad \quad \widehat{m}_j\defeq m_j +\min\{n_j,C\}, \quad \text{ for each } j\in \N,
\end{equation*}
with $\widehat{D}$ taking the role of $D$.

For each $k\in \N$, denote $\Delta(k) \defeq \#\{ n \leq k : f^n(U) \subset \widehat{D} \}/k$. We want to show that $\lim_{k\to \infty}\Delta(k)=\lambda$. Observe that for each $k\in \N$, there exists $p_k \in \N$ such that $\widehat{N}_{p_k}\leq k<\widehat{N}_{p_k+1}$, where for each $j\in \N$,  $\widehat{N}_j\defeq \sum_{i=1}^{j}\widehat{n}_i+\widehat{m}_i.$ Hence,
\begin{equation*}
	\frac{\sum_{j=1}^{p_k}\widehat{n}_j}{\widehat{N}_{p_k+1}}\leq \Delta(k)\leq\frac{\sum_{j=1}^{p_k+1}\widehat{n}_j}{\widehat{N}_{p_k}+\widehat{n}_{p_k+1}}.
\end{equation*}


Equivalently, 
\begin{equation}\label{eq_Lambdak}
	\frac{1}{1+\frac{\widehat{n}_{p_k+1}+\sum_{j=1}^{p_k+1}\widehat{m}_j}{\sum_{j=1}^{p_k}\widehat{n}_j}}\leq \Delta(k) \leq \frac{1}{1+\frac{\sum_{j=1}^{p_k}\widehat{m}_j}{\sum_{j=1}^{p_k+1}\widehat{n}_j}}.
\end{equation}
Now, 
\begin{align*}
 \lim_{p\to \infty}\frac{\sum_{j=1}^{p}\widehat{m}_j}{\sum_{j=1}^{p+1}\widehat{n}_j}&= \lim_{p\to \infty}\frac{\sum_{j=1}^{p}\widehat{m}_j+\widehat{n}_{p+1}+\widehat{m}_{p+1}}{\sum_{j=1}^{p}\widehat{n}_j}\\ 
	&=\begin{cases}
	 \lim_{p\to \infty}\frac{p(p+1)/2+pC+(p+1)^2+(p+1)}{p(p+1)(2p+1)/6-pC}=0,& \lambda=1,\\	
		 \lim_{p\to \infty}\frac{p(p+1)/2+pC+\frac{\lambda}{1-\lambda}\cdot(p+1)+(p+1)}{\frac{\lambda}{1-\lambda}\cdot p(p+1)/2+p-pC}=\frac{1-\lambda}{\lambda}, &  \text{otherwise},\\
\end{cases}
\end{align*}
and so, since $p_k\to \infty$ as $k\to \infty$, \eqref{anothereq2} follows from this together with \eqref{eq_Lambdak}.
\end{proof}

\section{Sketch of proof of Theorem~\ref{fancytheorem}}

For this result we need to modify the construction in Theorem~\ref{maintheorem}. Let us fix a domain $U$, a collection of points $(z_l)_{1 \leq l \leq p}$ and numbers $(\lambda_l)_{1 \leq l \leq p}$ as in the statement of the theorem, with $p\geq 2$. We may assume without loss of generality that $\sum^p_{l=1} \lambda_l=1$, since otherwise we can create an extra domain $D_{p+1}$ and $\lambda_{p+1}\defeq 1-\sum^p_{l=1}\lambda_l$. For each $1 \leq l \leq p$, let $D_l$ be a translated copy of the disk $\maindisc$ provided by Lemma~\ref{pullbacklemma}, centred at $z_l$, and let $B^l_0$ be the corresponding translation of the set $B_0$. For simplicity, we will assume that the sets
$$\{D_l, B^l_0+k, \text{ for some } l\leq p, k\geq 0\}$$ have pairwise disjoint closures, and that $U\subset D_1$ and $\Phi(U)\subset B^1_0$. Otherwise, we simply scale and rotate them, and modify the translations in our construction appropriately.

For each $1 \leq l \leq p$ and $j\in \N$, define
	\begin{equation*}
		\quad m^l_j\defeq j, \quad \text{ and } \quad 	n^l_j\defeq \left\lceil \lambda_l\cdot j^2 \right\rceil.
	\end{equation*}
	Our domain $U$ will spend $n^1_1$ iterates inside $D_1$, $m^1_1$ outside $\bigcup^p_{j=1}D_j$, $n^2_1$ iterates inside $D_2$, and so on. In a rough sense, the proof proceeds essentially as in section~\ref{sec_3}, with $p$ copies of each of the sets defined in the proof of Proposition~\ref{prop_properties_f}. The main novelty is that $h^l_j(Q^l_j)\subset C^{l+1}_{j-1}$ for $l<p$, and $h^p_j(Q^p_j)\subset C^{1}_j$; see Figure~\ref{fig_13}. Hence, each step in the previous construction is replaced by a cyclic one, where $U$ passes through all $D_l$ before returning to $D_1$. In addition, our maps $f_j$ will fix all the points $z_j$, and the discs $\Delta_j$ are replaced by squares of side-length $m^1_j-1$.

	\begin{figure}[htp]
		\centering
		\def\svgwidth{0.99\linewidth}
		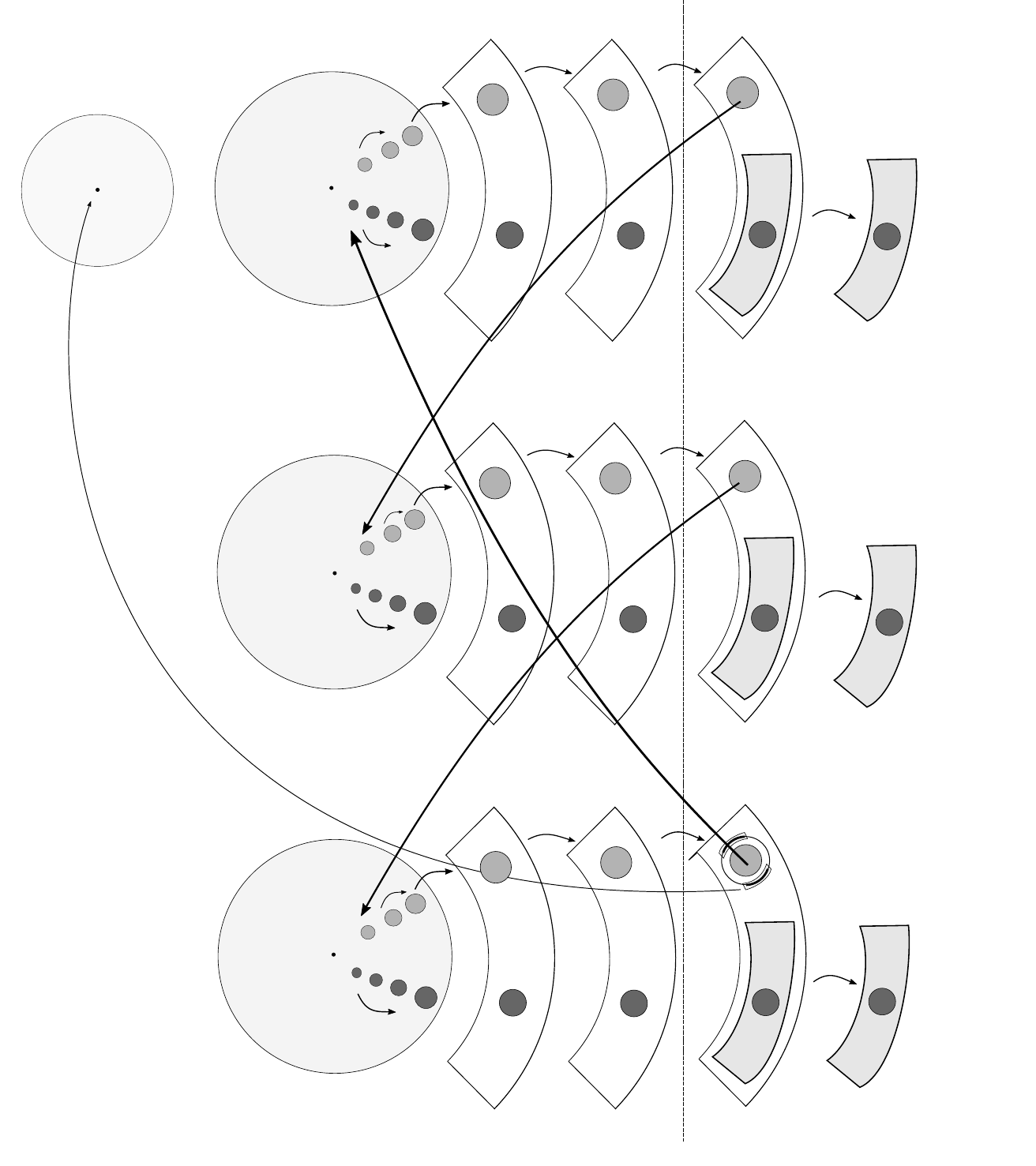
		\caption{Schematic of the sets and functions in the construction of $f$ satisfying Theorem \ref{fancytheorem}.}
		\label{fig_13}
	\end{figure}

	We claim that the limit function $f$ built this way satisfies the requirements of the theorem. To see that, for each $k\in \N$ and $1\leq l\leq p$, denote $\Delta^l(k) \defeq \#\{ n \leq k : f^n(U) \subset D_l\}/k$. We want to show that $\lim_{k\to \infty}\Delta^l(k)=\lambda_l$. For each $j\in \N$,  let $N_j\defeq \sum_{l=0}^{p}\sum_{i=0}^{j}n^l_i+\widehat{m}^l_i,$ and observe that for each $k\in \N$, there exists $q_k \in \N$ such that $N_{q_k}\leq k<N_{q_k+1}$. Hence, 
	\begin{equation*}
		\frac{\sum_{j=1}^{q_k}\lceil\lambda_l\cdot j^2\rceil}{p\sum^{q_k+1}_{j=0} j+\sum^{q_k+1}_{j=0} j^2}\approx \frac{\sum_{j=1}^{q_k}n^l_j}{N_{q_k+1}}\leq \Delta^l(k)\leq \frac{\sum_{j=1}^{q_k+1}n^l_j}{N_{q_k}+n^l_{q_k+1}}\approx \frac{\sum_{j=1}^{q_k+1}\lceil\lambda_l\cdot j^2\rceil}{p\sum^{q_k}_{j=0} j+\sum^{q_{k}+1}_{j=0} j^2 },
	\end{equation*}
	where we have used that $\sum^p_{l=1} \lambda_i=1$.
	Then, a calculation similar to the one in the proof of Corollary \ref{cor_stronger} yields the desired bounds.
	
	\bibliographystyle{alpha}
	\bibliography{biblioNearlyboundedWD}
\end{document}